%% file: Gudula.tex
\documentclass[12pt]{article} 

\usepackage[utf8]{inputenc}   
\usepackage[T1]{fontenc}      

\usepackage[largesc,osf]{newtxtext}
\usepackage[amsthm,nosymbolsc,vvarbb]{newtxmath}
\usepackage[cal=euler,frak=euler]{mathalpha} 

\usepackage{array,cite,enumitem,mathtools,titlesec}

\usepackage{tikz-cd}
\usetikzlibrary{babel}

\usepackage[margin=3cm]{geometry} 

\title{Projectable reduced $f$-rings\\
       admitting elimination of quantifiers}

\author{J. I. Guier \\[9pt]
{\small
Centro de Investigación en Matemática Pura y Aplicada,} \\
{\small 
Escuela de Matemática, Universidad de Costa Rica,} \\
{\small 
11501 San José, Costa Rica}}

\date{April 30, 2026}  

\input{fonts-charlas7} 

\theoremstyle{plain}
\newtheorem{them}{Theorem}[section]
\newtheorem{prop}[them]{Proposition}
\newtheorem{lem}[them]{Lemma}

\newtheorem{claim}{Claim}  

\theoremstyle{definition}

\theoremstyle{remark}
\newtheorem{rem}[them]{Remark}

\newcommand{\hideqed}{\renewcommand{\qed}{}} 

\setlist[itemize]{noitemsep}
\setlist[enumerate]{label={\textup{(\alph*)}}} 

\titleformat{\section}{\normalfont\large\bfseries}
                      {\thesection}{1em}{}
\titlespacing{\section}{0pt}{*3.5}{*2.3}
\titleformat{\subsection}{\normalfont\normalsize\bfseries}
                         {\thesubsection}{0.7em}{}
\titlespacing{\subsection}{0pt}{*3.25}{*1.5}
\titleformat{\subsubsection}{\normalfont\small\bfseries}
                            {\thesubsubsection}{0.5em}{}
\titlespacing{\subsubsection}{0pt}{*3.0}{*1.2}
\titleformat{\paragraph}[runin]{\normalfont\bfseries}{}{0pt}{}
\titlespacing{\paragraph}{0pt}{\medskipamount}{\wordsep}

\begin{document}

\maketitle

\begin{abstract}
In this note, we give a characterization of all projectable and
divisible-projectable reduced $f$-rings satisfying the first convexity
property and admitting elimination of quantifiers, in the language of
lattice-ordered rings with the divisibility relation, the radical
relation associated to the minimal prime spectrum, and the local
divisibility relation.
\end{abstract}

\section{Introduction} 
\label{sec:intro}

To place in perspective the results proved in this paper, we start by
stating them amongst the known quantifier elimination (q.e.)~results
within the realm of real closed rings, cf.~\cite{Sch-DDG, Sch-Lum}.

All rings in this paper will be commutative with unity.
Let $\Lor = \{0, 1, +, -, \cdot, <\}$ denote the language of ordered
rings and $\Lanre = \{0, 1, +, -, \cdot, \w\}$ the language of
lattice-ordered rings. The following theories of rings admit q.e.\
in the indicated language.  

\begin{enumerate}[nosep]

\item 
The theory of real closed fields (RCF) in the language $\Lor$,
cf.~\cite{Tarski1, Tarski2}.

\item 
The theory of real closed valuation rings (RCVR) in the language
$\Lor$ with an added binary relation symbol denoting the (standard)
divisibility, cf.~\cite[\S\,2]{Ch-Dick2}.

\item 
The theory of von Neumann regular real closed rings without nonzero
minimal idempotents, in the language $\Lanre$ with an extra binary
relation symbol denoting the radical relation associated to the
minimal prime spectrum, cf.~\cite{Pre-Schw}.

\end{enumerate}

More recently:
\begin{enumerate}[nosep,resume]

\item 
The theory of projectable and divisible-projectable real closed rings
satisfying a regularity condition (sc-regularity), the first convexity
property and without nonzero minimal idempotents, in the language of
$\Lanre$ with the (standard) divisibility relation, the radical
relation used in the previous item~(c), and an extra binary relation
symbol for a local divisibility relation, cf.~\cite{Guier4}.

\end{enumerate}

In Section~\ref{sec:products} below we show, suitably adapting the
technique developed in~\cite{Guier4}, that the following theories of
rings also admit~q.e.:

\begin{enumerate}[nosep,resume]

\item 
The theory of products of two real closed fields, in the same language
of item~(c). This first-order theory is given by von~Neumann regular
real closed rings which have only four idempotents.

\item 
The theory of products of two real closed valuation rings, in the
language of item~(d). This first-order theory is given by the theory
of projectable real closed rings satisfying the sc-regularity
condition and the first convexity property, again with only four
idempotents.

\end{enumerate}

In the final Section~\ref{sec:punchline}, using one of the author's
previous results \cite[Thm.~4.10]{Guier1} (the `only~if' implication),
based on earlier work of F.~Point \cite{Point, Point-vd}, we establish
that the six listed cases are the only theories of projectable and
divisible-projectable reduced $f$-rings with the first convexity
property admitting q.e.\ in the language described in cases~(d)
and~(f).

\section{Basic notions and notations} 
\label{sec:basics}

Throughout this note $A$ will be a projectable reduced $f$-ring, see
\cite[\S\,7.5]{BKW} or \cite[\S\,2]{Guier1} for definitions. By
\cite[\S\,6.12]{Kei},
\begin{equation}
A \in \Ga^a_{\Lor} \bigl( \pi A, (A/p)_{p\in\pi A} \bigl),
\label{eq:class-of-A} 
\end{equation}
where 
$$
\pi A = \set{p \in \spec(A) : p \text{ is a minimal prime ideal}}
= \specm(A).
$$
The notation $\Ga^a_\Le \bigl( X, (A_x)_{x\in X} \bigr)$ stands for
the class of (atomic) Boolean products of the family
$\set{A_x : x \in X}$ over the topological (Boolean) space $X$ in the
language~$\Le$, i.e., continuous sections from $X$ to the (disjoint)
union of the stalks $\set{A_x : x \in X}$. See
\cite[\S\,1]{Burris-Werner} or \cite[Def.~2.1]{Guier1} for these
precise definitions.

\medskip

According to \cite[\S\,1.1]{Ch-Dick2}, a \textbf{real closed valuation
ring} is an ordered domain that satisfies the intermediate value
property for polynomials in one variable, but is not a field. In
\cite[Thm.~4A and \S\,2]{Ch-Dick2}, the authors proved completeness of
this theory (RCVR) and its elimination of quantifiers in the language
$\Lor \cup \{\mid\}$ of ordered rings with the divisibility relation.

\medskip

A prime ideal $p$ of a commutative ring $A$ is minimal if and only if,
for each $x \in p$, there is some $y \notin p$ such that $xy$ is
nilpotent (equal to $0$ if the ring $A$ is reduced);
cf.~\cite[\S\,1.1]{He-Je}.

\medskip

Radical relations were introduced in \cite[\S\,1]{Pre-Schmid}, and
reintroduced -- in the opposite direction -- in
\cite[\S\,4]{Pre-Schw}. The following is the definition given
in~\cite[\S\,4]{Pre-Schw}:
\begin{align}
(1) &\enspace a \rad a,
\nonumber \\
(2) &\mot{if} a \rad b \mot{and} b \rad c \mot{then} a \rad c,
\nonumber \\
(3) &\mot{if} a \rad c \mot{and} b \rad c \mot{then} a + b \rad c,
\nonumber \\
(4) &\mot{if} a \rad b \mot{then} ac \rad bc,
\nonumber \\
(5) &\enspace a \rad 1 \mot{and} 1 \notrad 0,
\nonumber \\
(6) &\enspace b \rad b^2,
\label{eq:radical-relations} 
\end{align}
for all $a,b,c \in A$.

\goodbreak 

In this context, \cite[Thm.~2.5]{Pre-Schmid} then shows that 
for any radical relation~$\rad$, there exists a subset
$X \subseteq \spec(A)$ such that
\begin{equation}
a \rad b \iff \forall p \in X\, (a \notin p \implies b \notin p).
\label{eq:radical-relation-X} 
\end{equation}
This radical relation is denoted by~$\rad_X$. The radical relation
associated to the minimal prime spectrum is given by
\cite[Prop.~(a), p.~21]{Pre-Schw}: 
\begin{align}
a \rad_{\pi A} b &\iff  \Ann(b) \subseteq \Ann(a)
\nonumber \\
&\iff  \forall x (bx = 0 \to ax = 0)
\nonumber \\
&\iff  \forall x (ax \neq 0 \to bx \neq 0).
\label{eq:radical-relation-pi-A} 
\end{align}
From now on, this radical relation $\rad_{\pi A}$ will be denoted
simply by~$\rad$.

\medskip

In \cite[Def.~2.5]{Guier1}, a lattice ordered ring $A$ is called
\textbf{divisible-projectable} if:
$$
\forall x \forall y \Bigl( y \neq 0 \to \exists z \exists w \bigl(
x = z + w \ett z \perp w \ett y \mid
z \ett \forall w'( w' \neq 0 \ett w' \perp (w - w')
\to y \nmid w') \bigr) \Bigr)
$$
is valid in~$A$. In \cite[Def.~2.8]{Guier1}, a ring $A$ is called
\textbf{sc-regular} if there exists an element $u \in A$ such that
$\Ann(u) = \{0\}$ (or $1 \rad u$) and $u \nmid e$ for every nonzero
idempotent $e \in A$.

\medskip

By \cite[Def.~4.3]{Guier1}, the binary function symbol 
$\div(\cdot,\cdot)$ is defined, in the theory of reduced projectable
and divisible-projectable $f$-rings, by:
\begin{align}
\div(x,y) = c \longequi c \in \bipol{y} \land \exists z \exists w 
&\bigl( x = z + w \land z \perp w \land cy = z
\nonumber \\
&\quad \land \forall w' (w' \neq 0 \land w' \perp (w - w')
\to y \nmid w') \bigr).
\label{eq:div-rigmarole} 
\end{align}
In the theory of totally ordered domains, \cite[Lem.~4.4]{Guier1} 
yields:
\begin{equation}
\div(x,y) = \begin{cases}
c & \text{if } y \neq 0 \land yc = x, \\
0 & \text{if } y = 0 \lor (y \neq 0 \land y \nmid x). \end{cases}
\label{eq:div-values} 
\end{equation}
As already noted in~\eqref{eq:class-of-A}, if $A$ is a reduced
projectable $f$-ring, then by \cite[\S\,6.13]{Kei}, 
\begin{subequations}
\label{eq:three-faces-of-A} 
\begin{equation}
A \in \Ga^a_{\Lor} \bigl( X, (A_x)_{x \in X} \bigl),
\label{eq:three-faces-of-A-one} 
\end{equation}
where $X$ is a Boolean space and $\set{A_x : x \in X}$ is a family of
totally ordered domains. If in addition $A$ is divisible-projectable,
then by \cite[Prop.~2.6]{Guier1},
\begin{equation}
A \in \Ga^a_{\Lor\cup\{\mid\}} \bigl( X, (A_x)_{x\in X} \bigl),
\label{eq:three-faces-of-A-two} 
\end{equation}
where the Boolean product is considered with the divisibility relation
in the language. In this context, it easily seen that
\begin{equation}
\div_A(a,b)(x) = \div_{A_x}(a(x), b(x))
\label{eq:three-faces-of-A-three} 
\end{equation}
\end{subequations}
for all $a,b \in A$ and $x \in X$.

\medskip

Local divisibility is defined in \cite[Def.~2.2]{Guier4} by:
\begin{equation}
y \locdiv w \longequi 
w = 0 \lor \exists w' (w' \neq 0 \land w'(w - w') = 0 \land y \mid w').
\label{eq:local-divisibility} 
\end{equation}
According to \cite[end of \S\,2]{Guier4}, the sc-regularity of $A$ may
be restated by:
\begin{equation}
A \models \exists u\, \bigl( 1 \rad u \land u \notdivloc 1 \bigr).
\label{eq:sc-regularity} 
\end{equation}
Thm.~6.13 of~\cite{Guier4} established elimination of quantifiers of
the theory of projectable and divisible-projectable sc-regular real
closed rings satisfying the first convexity property without nonzero
minimal idempotents, in the language 
$\Lanre \cup \{\mid,\rad,\locdiv\}$.

\section{Products of two real closed valuation rings} 
\label{sec:products}

Let $A = V_0 \x V_1$ where $V_0$ and $V_1$ are two real closed
valuation rings. Using Prop.~3.4(ii) and Cor.~2.11 of~\cite{Guier1}
and \cite[Lem.~2.13]{GuierThese}, it may be seen that $\Th(A)$ is the
theory of sc-regular projectable real closed rings with the first
convexity property where the Boolean algebra of idempotents satisfies
$\exists_{=4}$, i.e., the formula declaring that the structure has
$4$~elements. (Note that the representation space of~$A$ is the
discrete space $\{0,1\}$, and therefore the Boolean algebra is
$2^{\{0,1\}} = 2^2$.) In this context, the divisible-projectability
property is redundant since the space is discrete: see
\cite[Prop.~2.6]{Guier1}. We denote this theory by $\rT_2 = \Th(A)$.

\begin{them} 
\label{th:T2-is-qe}
$\rT_2$ admits elimination of quantifiers in the language
$\Lanre \cup \{\mid,\rad,\locdiv\}$.
\end{them}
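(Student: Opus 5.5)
The plan is to reduce quantifier elimination for $\rT_2$ to the known elimination for RCVR in $\Lor\cup\{\mid\}$, using the fact that every model of $\rT_2$ is a product $V_0\x V_1$ of two real closed valuation rings. Equivalently, models are Boolean products over the two-point discrete space, and there the divisibility relation is computed coordinatewise by \eqref{eq:three-faces-of-A-two}.

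\textbf{Step 1: the back-and-forth criterion.} I will use the standard criterion. Let $A = V_0\x V_1$ and $B = W_0\x W_1$ be models of $\rT_2$ with $B$ $|A|^+$-saturated. Let $C$ be a substructure of $A$ in the language $\Lanre \cup \{\mid,\rad,\locdiv\}$, and let $f\colon C\to B$ be an embedding. It suffices to extend $f$ to $C\langle a\rangle$ for any $a\in A$.

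\textbf{Step 2: the idempotents are quantifier-free definable.} Let $e_0=(1,0)$ and $e_1=(0,1)$. These are the only nontrivial idempotents. The key observation is that the splitting of $A$ into coordinates is recoverable from $C$ and $f$.

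\begin{itemize}
\item Take any $c\in C$. The atomic relations $c\rad 0$, $1\rad c$ and $c\locdiv 1$, among others, record quantifier-freely which coordinates of $c$ vanish.
\item If some $c\in C$ has exactly one zero coordinate, I will show that $e_0$ or $e_1$ is a quantifier-free term-like function of $c$, since an idempotent is $\div(c,c)$-like. However, $\div$ is not in the language.
\item Instead I will argue directly. Define $f$ on the corresponding coordinate projections $c\,e_i$ by matching the support pattern, which is preserved by $f$ because $\rad$ is preserved. Then extend $C$ by $e_0,e_1$. The extension is a substructure embedding because the atomic relations on $C[e_0,e_1]$ decompose coordinatewise.
\end{itemize}

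So after extending, I may assume $e_0,e_1\in C$ and that $f(e_i)$ are the corresponding idempotents of $B$, after possibly swapping $W_0,W_1$. If $C$ contains no element with exactly one zero coordinate, then every nonzero element of $C$ is a non-zero-divisor. In that case I will choose the idempotents in $B$ by saturation, realizing the type of the $\rad$- and $\locdiv$-pattern.

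\textbf{Step 3: reduction to one coordinate at a time.} Once $e_i\in C$, the map $f$ induces embeddings $f_i\colon C e_i\to W_i$. Because $f$ preserves $\mid$ and the lattice operations, these are embeddings of ordered rings with divisibility, i.e., of $\Lor\cup\{\mid\}$-structures. Here I use two facts:
\begin{itemize}
\item in $V_i$, the relation $x\mid y$ corresponds to $xe_i\mid ye_i$ combined with divisibility in the other coordinate;
\item the lattice order is coordinatewise.
\end{itemize}
Next, write $a=ae_0+ae_1$. By RCVR quantifier elimination \cite{Ch-Dick2} and saturation of $W_i$, each $f_i$ extends to $ae_i$. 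Pasting the two extensions gives an extension of $f$ to $a$. It remains to check that the relations $\rad$ and $\locdiv$ are preserved on the generated substructure. On a product of two domains these are quantifier-free definable from the zero/nonzero and divisibility patterns of the coordinates:
\begin{itemize}
\item $x\rad y$ holds iff for each $i$, $xe_i\ne0$ implies $ye_i\ne0$;
\item $y\locdiv w$ holds iff $w=0$, or $ye_i\mid we_i$ with $we_i\ne0$ for some $i$, or $w$ is a non-zero-divisor divisible by $y$.
\end{itemize}
Both relations are therefore preserved.

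\textbf{Main obstacle.} I expect the main obstacle to be Step 2, the case where $C$ does not already contain the idempotents. There the choice of images for $e_0,e_1$ must be compatible with all atomic facts of $C$. In particular, divisibility facts like $c\mid d$ for non-zero-divisors $c,d$ do not determine which coordinate carries which valuation behaviour. In that case I would first try to prove that the quantifier-free type of $(C,e_0)$ is determined by the quantifier-free type of $C$ together with a choice of which $V_i$-coordinate is mapped to which $W_j$-coordinate. Both choices would then be realizable, and one of them works by the RCVR-completeness of each factor.
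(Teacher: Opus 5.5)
There is a genuine gap. Your overall route (a back-and-forth into an $|A|^+$-saturated $B$: first adjoin $e_0,e_1$, then extend factorwise by quantifier elimination for RCVR) differs from the paper's, which uses model completeness via Feferman--Vaught plus amalgamation. Your case split, according to whether some nonzero $c\in C$ has $1\not\rad c$, is the right one. But the step that carries all the content, adjoining the idempotents, is not proved.

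\textbf{The case where every nonzero $c\in C$ satisfies $1\rad c$.} Here you only say what you ``would first try'', and the proposed mechanism does not work. $B$ has exactly four idempotents, so saturation chooses nothing. The only freedom is which factor of $A$ goes to which factor of $B$, and completeness of RCVR says nothing about whether $\pi_j\circ f\circ\pi_i^{-1}$ preserves divisibility. The missing idea is as follows.
\begin{itemize}
\item $C$ is totally ordered, since $x^{+}x^{-}=0$ with $x^{\pm}\in C$. Both projections $\pi_i\colon C\to V_i$ are therefore injective order embeddings.
\item Let $\mathcal{O}_i$ be the pull-back of $V_i$ to $F=\operatorname{Frac}(C)$. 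For nonzero $x,y\in C$, $x\mid y$ iff $y/x\in\mathcal{O}_0\cap\mathcal{O}_1$, and $x\locdiv y$ iff $y/x\in\mathcal{O}_0\cup\mathcal{O}_1$.
\item Convex valuation rings of an ordered field form a chain. So the atomic diagram of $C$ determines the smaller and the larger of the two rings, and likewise for $f(C)\subseteq B$.
\item Send the factor of $A$ carrying the smaller ring to the factor of $B$ carrying it. This is the matching (the only one, unless $\mathcal{O}_0=\mathcal{O}_1$) that makes both coordinate maps $\Lor\cup\{\mid\}$-embeddings. Then $f$ extends to $C[e_0]=\pi_0(C)\x\pi_1(C)$.
\end{itemize}
You cannot instead hope that each coordinate map is an embedding of $(C,\mid)$. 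Take distinct convex valuation rings $V_0\subseteq V_1$ of one real closed field, and $0<s\in V_0$ with $s^{-1}\in V_1\setminus V_0$. For $C$ generated by $(s,s)$, $s\mid 1$ in $V_1$ but $(s,s)\nmid 1$ in $A$. The same point is glossed over in Case~2 of the proof of Lemma~\ref{lm:T2-amalgamates}. There, ``$d_1\locdiv d_2$ in the domain $D$, hence $d_1\mid d_2$'' treats $\locdiv$ on $D$ as witnessed inside $D$, whereas it is only the restriction of $\locdiv$ from $A$. So you cannot import that step.

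\textbf{The case where some $c$ has exactly one zero coordinate.} This is also under-argued. That the atomic relations of $V_0\x V_1$ ``decompose coordinatewise'' does not by itself show that the induced maps $\pi_i(C)\to W_{\sigma(i)}$ preserve divisibility in both directions. Moreover, $C$ may contain an element $c$ with support $\{0\}$ but none with support $\{1\}$. For example, if $\epsilon\in V_0$ is a positive transcendental infinitesimal, the substructure generated by $(\epsilon,0)$ is $\{(P(\epsilon),P(0)) : P\in\mathbb{Z}[X]\}$. It is even a ring-theoretic domain, which is why your split by $1\rad c$ is better than ``domain or not''. In this situation:
\begin{itemize}
\item Coordinate $0$ is controlled by $xc\mid yc$ and $xc^2\geq 0$.
\item The kernel and order in coordinate $1$ are controlled by $x\rad c$ and $(x\w 0)\rad c$.
\item Divisibility in coordinate $1$ requires $\locdiv$: $x_1\mid y_1$ iff $y\rad c$, or ($xc\mid yc$ and $x\mid y$), or ($xc\nmid yc$ and $x\locdiv y$).
\end{itemize}
This is precisely where $\locdiv$ is indispensable, yet your proposal uses it only in the final preservation check. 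With these two ingredients supplied, your back-and-forth argument goes through.
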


The proof consists of several steps.

\begin{lem} 
\label{lm:T2-is-model-complete}
The theory $\rT_2$ is model complete. 
\end{lem}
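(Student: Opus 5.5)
The plan is to prove model completeness by a direct reduction to the model completeness of RCVR, using the fact that every model of $\rT_2$ splits canonically as a product of two real closed valuation rings. This splitting is given by its two nontrivial idempotents, and these idempotents are definable; in the presence of the existential quantifier over the Boolean algebra they are at least preserved by embeddings. Concretely, let $B \subseteq C$ be models of $\rT_2$ with $B$ a substructure of $C$ in $\Lanre \cup \{\mid,\rad,\locdiv\}$ (model completeness only needs the ring language, but we work in the full one). We must show $B \preceq C$.

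\textbf{Step 1 (idempotents are shared).} Since $\rT_2$ says the Boolean algebra of idempotents has exactly $4$ elements, $B$ has idempotents $0,1,e,1-e$. Each of these is still idempotent in $C$. The $4$ idempotents of $B$ are distinct, so they exhaust those of $C$. Hence $B = Be \times B(1-e)$ and $C = Ce \times C(1-e)$, and the inclusion respects the decomposition: $Be \subseteq Ce$ and $B(1-e)\subseteq C(1-e)$.

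\textbf{Step 2 (factors are RCVRs and the inclusions are ring embeddings).} By the description of $\Th(A)$ preceding Theorem~\ref{th:T2-is-qe}, each factor $Be$, $Ce$, etc.\ is a real closed valuation ring. The ordering on $Be$ is the restriction of the lattice order, and in an $f$-ring we have $x\ge 0 \iff x = x\vee 0$. This holds because $A$ is a Boolean product over the discrete space $\{0,1\}$, so stalks are the factors and the order is componentwise. So $Be\subseteq Ce$ is an embedding of ordered rings, and similarly for the other factor.

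\textbf{Step 3 (transfer).} RCVR is model complete; indeed it has q.e.\ in $\Lor\cup\{\mid\}$ by \cite[\S\,2]{Ch-Dick2}. Model completeness in $\Lor$ alone is what we need. If the q.e.\ requires $\mid$, we must check that divisibility is preserved. This follows because RCVR is complete and $Be\subseteq Ce$ are both RCVRs in $\Lor$. Model completeness of RCVR in $\Lor$ itself is classical, since RCVR is the model companion (Cherlin--Dickmann). So $Be \preceq Ce$ and $B(1-e)\preceq C(1-e)$ in $\Lor$. By the Feferman--Vaught theorem for finite direct products, or simply by interpreting $A$ in the pair $(Ae, A(1-e))$ uniformly, we get $B \preceq C$ in $\Lor$. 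The relations $\w,\mid,\rad,\locdiv$ are all $\Lor$-definable in $f$-rings: $\rad$ via~\eqref{eq:radical-relation-pi-A} and $\locdiv$ via~\eqref{eq:local-divisibility}. Hence the elementary embedding holds in the full language.

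\textbf{Expected obstacle.} The delicate point is Step 3's uniform interpretation. Every formula about $(b_1,\dots,b_n)\in B$ must translate into a Boolean combination of formulas about the components $b_ie$ in $Be$ and $b_i(1-e)$ in $B(1-e)$, with the same translation valid in $C$ because $e$ is the same element. I would prove this by induction on formulas, quantifying over $x = xe + x(1-e)$. An existential quantifier thus splits into a pair of existential quantifiers, one in each factor.
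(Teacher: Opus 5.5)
\textbf{Gap in Step~3.} Your overall architecture matches the paper's: shared idempotents, splitting into RCVR factors, model completeness of RCVR on each factor, then Feferman--Vaught. But Step~3 rests on false claims and never uses the one hypothesis that makes the argument work.

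RCVR is \emph{not} model complete in $\Lor$, and $\rT_2$ is not model complete in the ring or $\Lanre$ language. Here is a counterexample.
\begin{itemize}
\item Let $K$ be a non-archimedean real closed field, $t\in K$ a positive infinitesimal, and $V$ the convex hull of $\mathbf{Z}$ in $K$.
\item Let $L\supseteq K$ be real closed with some $s>0$ below every positive element of $K$, and let $W$ be the convex hull of $K$ in $L$.
\end{itemize}
Both are real closed valuation rings, since $1/t\notin V$ and $1/s\notin W$. The inclusion $V\subseteq W$ is an $\Lor$-embedding, yet $\exists y\,(ty=1)$ holds in $W$ and fails in $V$. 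Likewise $V\x V\subseteq W\x W$ is an $\Lanre$-embedding of models of $\rT_2$ that is not elementary.

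So ``model completeness only needs the ring language'' is wrong. The claim that preservation of divisibility ``follows because RCVR is complete'' is also invalid: completeness says nothing about a particular inclusion, and the example shows divisibility need not be preserved.

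\textbf{The repair.} What you need is that $B\subseteq C$ is an embedding in a language containing $\mid$, which you set up and then discarded.
\begin{itemize}
\item For $a,b\in Be$, $a\mid b$ in $Be$ iff $a\mid b$ in $B$: if $ac=b$ then $a(ce)=be=b$. The same holds in $C$.
\item Hence $Be\subseteq Ce$ and $B(1-e)\subseteq C(1-e)$ are $\Lor\cup\{\mid\}$-embeddings.
\item Model completeness of RCVR in $\Lor\cup\{\mid\}$ (Cherlin--Dickmann) then gives $Be\selem Ce$ and $B(1-e)\selem C(1-e)$.
\end{itemize}
This is exactly how the paper argues. With this repair, Steps~1--2 and your Feferman--Vaught/definability step go through as written.
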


\begin{proof}
Let $A,B \models \rT_2$ be such that $A \subseteq B$ in the language
$\Lanre \cup \{\mid,\rad,\divloc\}$. Therefore $A = V_0 \x V_1$ and
$B = B_0 \x B_1$, where the $B_0$ and $B_1$ are also real closed
valuation rings. As the Boolean algebra of idempotents of~$A$ consists
of four elements $(0,0), (0,1), (1,0), (1,1)$, and that of~$B$
also has four elements, they either coincide or correspond via
$(0,1) \in A \mapsto (1,0) \in B$ and 
$(1,0) \in A \mapsto (0,1) \in B$. If they coincide, then 
$V_0 \subseteq B_0$ and $V_1 \subseteq B_1$; otherwise, one gets
$V_0 \subseteq B_1$ and $V_1 \subseteq B_0$. Hence, we may as well
suppose that
\begin{equation}
V_0 \subseteq B_0 \word{and} V_1 \subseteq B_1.
\label{eq:nice-inclusions} 
\end{equation}
Since the inclusion $A \subseteq B$ preserves the order, the radical
relation and the divisibility, the inclusions in
\eqref{eq:nice-inclusions} also preserve the order, the divisibility
and the radical relation (the last of these is trivial since $V_0$,
$V_1$, $B_0$, $B_1$ are domains). By model completeness of the theory
of real closed valuation rings \cite[Thm.~4B]{Ch-Dick2}, one deduces
$V_0 \selem B_0$ and $V_1 \selem B_1$. By the Feferman-Vaught theorem
\cite{Fe-Vau}, $A = V_0 \x V_1 \selem B_0 \x B_1 = B$, which yields 
the model completeness of~$\rT_2$.
\end{proof}

\begin{lem} 
\label{lm:T2-amalgamates}
The universal theory of $\rT_2$ has the amalgamation property.
\end{lem}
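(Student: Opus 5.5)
The plan is to describe the models of the universal theory of $\rT_2$ concretely, and then to amalgamate coordinatewise using the amalgamation property of the universal theory of RCVR. That property holds because RCVR admits q.e.\ in $\Lor\cup\{\mid\}$ \cite[\S\,2]{Ch-Dick2}.

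\textbf{Step 1: the shape of substructures.} Let $C$ be a model of $\forall\rT_2$, so $C\subseteq B=B_0\x B_1$ with $B\models\rT_2$, in the language $\Lanre\cup\{\mid,\rad,\locdiv\}$. I first want to show that $C=C_0\x C_1$, where $C_i$ is the projection of $C$ to $B_i$. Here $C_i$ is a substructure of $B_i$ in $\Lor\cup\{\mid\}$, i.e.\ $C_i\models\forall\mathrm{RCVR}$, and divisibility in $C_i$ is the restriction of divisibility in $B_i$.

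The idempotents $(1,0)$ and $(0,1)$ need not lie in $C$ a priori, and this is where $\rad$ and $\locdiv$ are used. Pick $a\in C$ with $a(0)\neq0$ and $a(1)=0$, if such an element exists. Then $a^+\w\ldots$ does not by itself produce the idempotent, so instead I use the relations directly: the atomic data $a\rad b$, $a\mid b$ and $y\locdiv w$ on $C$ are computed in $B$ coordinatewise via the support. Concretely,
\begin{itemize}
\item $a\rad b$ holds iff $\operatorname{supp}a\subseteq\operatorname{supp}b$;
\item $y\locdiv w$ holds iff $w=0$ or $y(i)\mid w(i)$ for some $i$ with $w(i)\neq0$.
\end{itemize}
So even without the idempotents, the structure of $C$ is determined by the pair $(C_0,C_1)$ together with the subring $C\subseteq C_0\x C_1$. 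If necessary, I will replace $C$ by the substructure of $B$ generated by $C$ and the two idempotents. This is harmless for amalgamation as long as the corresponding idempotents of the two extensions can be matched.

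\textbf{Step 2: matching the idempotents.} Given embeddings $f:C\to B$ and $g:C\to B'$ with $B,B'\models\rT_2$, I must decide which coordinates of $B$ and $B'$ correspond. If $C$ contains an element with support exactly one point, then $\rad$-equivalence classes of elements of $C$ detect the support. Since $\rad$ is preserved by $f$ and $g$, the correspondence of coordinates is forced, and after relabelling as in the proof of Lemma~\ref{lm:T2-is-model-complete} we get $f(C)_i\subseteq B_i$ and $g(C)_i\subseteq B'_i$ compatibly. If instead every nonzero element of $C$ has full support, then $C$ embeds diagonally-like into each coordinate, and either labelling works.

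\textbf{Step 3: coordinatewise amalgamation.} For each $i$, amalgamate $B_i$ and $B'_i$ over $C_i$, using that the maps $C_i\to B_i$ and $C_i\to B'_i$ are $\Lor\cup\{\mid\}$-embeddings. This gives $D_i\models$ RCVR; if needed, extend to a model so that $D_i$ is not a field. Set $D=D_0\x D_1\models\rT_2$. The maps $B\to D$ and $B'\to D$ are the products of the coordinate embeddings. Because $\rad$ and $\locdiv$ are computed from supports and coordinatewise divisibility, as in Step 1, these product maps preserve all the relations in both directions, and they commute over $C$.

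\textbf{Main obstacle.} The delicate point is Step 1/2: showing that each projection $C\to C_i$ is genuinely an embedding for $\mid$, and that the identification of coordinates is forced by the atomic type of $C$. In particular, when $C$ lies diagonally, one must check that the induced maps on $C_i$ are well defined and divisibility-preserving. The divisibility part is subtle: $x\mid y$ in $C$ means divisibility in both coordinates simultaneously, while what is needed is divisibility in each coordinate separately. I would try first to recover the coordinate divisibility from $\locdiv$ combined with $\rad$, using the characterization of $\locdiv$ above. If that fails, I would adjoin the idempotents, checking that the subring they generate still embeds compatibly.
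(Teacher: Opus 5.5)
Your architecture matches the paper's: match the factors, then amalgamate factorwise using amalgamation for the universal theory of RCVR. However, the step that carries the content is only announced. In Step~3 you amalgamate $B_i$ and $B'_i$ over $C_i$, but $C_i$ is defined from $f$. You must show that $c\mapsto f(c)(i)$ and $c\mapsto g(c)(\sigma(i))$ have the same kernel and induce the same order and divisibility on the quotient of $C$, i.e.\ that these data are fixed by the atomic $\Le_1$-diagram of $C$. The paper's Claims~1--3 and Case~2 address exactly this. Also, the goal of Step~1, $C=C_0\x C_1$, is false in general: a diagonally embedded domain is a counterexample.

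When $C$ contains some $x$ with $\supp x=\{i\}$, your first idea works. With $j\neq i$:
\begin{itemize}
\item $c(i)=0\iff cx=0$ and $c(j)=0\iff c\rad x$;
\item $c(i)\geq 0\iff (c\w 0)x=0$ and $c(j)\geq 0\iff (c\w 0)\rad x$;
\item $c(i)\mid d(i)\iff cx\mid dx$;
\item $c(j)\mid d(j)$ is determined by the truth values of $cx\mid dx$, $c\mid d$, $c\locdiv d$ and the vanishing data, via your (correct) description of $\locdiv$.
\end{itemize}
All of these are atomic, so they transfer from $f$ to $g$, and $\sigma$ is forced as you say.

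The diagonal case is where the plan breaks: ``either labelling works'' is false. Take $C=\mathbb{Q}[t]$ with $t$ a positive infinitesimal. Let $V$ be an RCVR containing $C$ in which $t$ is not a unit (the finite elements of the real closure $R$ of $\mathbb{Q}(t)$). Let $W$ be one in which $t$ is a unit (the convex hull of $R$ in a real closed field with a positive element below all positive elements of $R$). Put $f(c)=(c,c)\in V\x W$ and $g(c)=(c,c)\in W\x V$. Both induce the same $\Le_1$-structure on $C$: nonzero elements have full support, $\mid$ is the intersection and (for $d\neq 0$) $\locdiv$ the union of the two coordinate divisibilities, and both are symmetric. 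Yet the factor of an amalgam receiving both first factors would need $t\nmid 1$ and $t\mid 1$ at once.

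Here $\rad$ is trivial, and $\mid,\locdiv$ cannot recover the individual coordinate divisibilities. Adjoining idempotents does not help, since you must still decide which ones to identify. The missing idea is as follows. In this case $C$ is a domain and each $f_i$ is an order embedding, so $f_i(c)\mid f_i(d)$ iff $d/c\in O_i$, where $O_i$ is the preimage of the $i$-th valuation ring. Each $O_i$ is a convex subring of the ordered field $\mathrm{Frac}(C)$. Convex subrings are linearly ordered by inclusion, so $\{O_0,O_1\}=\{O_0\cap O_1,\,O_0\cup O_1\}$ is determined by $\mid$ and $\locdiv$. You must choose $\sigma$ with $O'_{\sigma(i)}=O_i$ (the $O'_k$ being the analogous rings for $g$), and only then amalgamate.

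Do not import the paper's Case~2 for this. Its inference from $d_1\locdiv d_2$ and ``$D$ is a domain'' to $d_1\mid d_2$ would force $O_0=O_1$, which the example refutes. Your dichotomy (existence of an element of one-point support) is the right one, better than splitting on whether $C$ is a ring-theoretic domain. For instance, $\{(P(t),P(0)):P\in\mathbb{Z}[X]\}\subseteq V\x W$ is a domain containing $(t,0)$. But the dichotomy must be completed by the comparability argument above.
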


\begin{proof}
To obtain the amalgamation property for $(\rT_2)_\forall$, we
consider $A,B \models \rT_2$ and $D$ a substructure of $A$ and~$B$
admitting monomorphisms $f\colon D \to A$ and $g\colon D \to B$ in the
language $\Lanre \cup \{\mid,\rad,\divloc\}$. Therefore the Boolean
algebra of idempotents of $D$ has either four or two elements. 
Two cases ensue.

\paragraph{Case 1}
\textit{$D$ is not a domain}.

Write $A = V_1 \x V_2$ where $V_1,V_2$ are real closed valuation
rings. Set $p_1 := \set{(a,0) : a \in V_1}$, a prime ideal of~$A$,
which is clearly minimal, since $(a,0) (0,1) = (0,0)$ with
$(0,1) \notin p_1$. Similarly $p_2 := \set{(0,b) : b \in V_2}$ is
another minimal prime ideal of $A$. Since $A \models \rT_2$, the Stone
space of the Boolean algebra of idempotents of $A$ is the discrete
space of two elements; therefore:
$$
\specm (A) = \{ p_1, p_2 \}.
$$
It is clear that $f^{-1}(p_1)$ and $f^{-1}(p_2)$ are two prime ideals
of~$D$; then
$$
f^{-1} \colon \specm(V_1 \x V_2) \to \spec(D).
$$

\begin{claim} 
\label{cl:minimal-spectrum}
The minimal prime spectrum of $D$ is 
$\specm(D) = \Set{f^{-1}(p_1), f^{-1}(p_2)}$.
\end{claim}

\begin{proof}[Proof of Claim~\ref{cl:minimal-spectrum}]
Let us first see that $f^{-1}(p_1)$ is minimal. Now,
$f^{-1}(p_1) \neq \{0\}$ and $f^{-1}(p_2) \neq \{0\}$ since $D$ is not
an integral domain. So there exist nonzero $x_0 \in f^{-1}(p_1)$ and
$y_0 \in f^{-1}(p_2)$.

Denote $f_1 := \pi_1 \circ f \colon A \to V_1$ and 
$f_2 := \pi_2 \circ f \colon A \to V_2$; these are ring homomorphisms.
In particular, $f(x) = \bigl( f_1(x), f_2(x) \bigr)$ for $x \in A$.
Then $f_2(x) = 0$ and $f_1(y) = 0$, for $x \in f^{-1}(p_1)$,
$y \in f^{-1}(p_2)$. Therefore 
$f(xy) = f(x) f(y) = (f_1(x),0) (0,f_2(y)) = (0,0) = 0$; and then
$xy = 0$ since $f$ is injective. 

Note that $f_1(x_0) \neq 0$ since $x_0 \neq 0$, and  
$f_2(y_0) \neq 0$ since $y_0 \neq 0$. Then for any 
$x \in f^{-1}(p_1)$, there is some $y_0 \notin f^{-1}(p_1)$ such that
$xy_0 = 0$. So the ideal $f^{-1} (p_1)$ is a minimal prime ideal
of~$D$. Similarly, for $y \in f^{-1} (p_2)$ there is
$x_0 \notin f^{-1}(p_2)$ such that $x_0 y = 0$; and so $f^{-1}(p_2)$
is also a minimal prime ideal of~$D$. Hence,
\begin{equation}
\bigl\{ f^{-1}(p_1), f^{-1}(p_2) \bigr\} \subseteq \specm(D).
\label{eq:two-in-specmin} 
\end{equation}
To prove the reverse inclusion, take $q \in \specm(D)$. Since $D$ is
not an integral domain, there is $z \in q$, $z \neq 0$; and since $q$
is minimal, there is some $d \in D \setminus q$ such that $zd = 0$.
If $f(z) \notin p_1$ and $f(z) \notin p_2$, then $f_1(z) \neq 0$,
$f_2(z) \neq 0$. Thus $(1,1) \rad (f_1(z), f_2(z))$, that is,
$f(1) \rad f(z)$. That entails $1 \rad z$ since $f$ preserves the
radical relation; and so $\Ann(z) \subseteq \Ann(1) = \{0\}$, using
the equivalences~\eqref{eq:radical-relation-pi-A}. But that would give
$d = 0$, contradicting $d \notin q$. Thus either $f_1(z) = 0$ or
$f_2(z) = 0$, whereby $f(z) \in p_2$ or $f(z) \in p_1$.

Assume $f(z) \in p_1$; and suppose that there is some nonzero $z' \in
q$ such that $f(z') \notin p_1$, entailing $f_2(z') \neq 0$. Now,
$f(z) \in p_1$ makes $f_2(z) = 0$; and so
$$ 
f(z + z') = \bigl( f_1(z) + f_1(z'), f_2(z) + f_2(z') \bigr)
= \bigl( f_1(z) + f_1(z'), f_2(z') \bigr).
$$
We may suppose $f_1(z) + f_1(z') \neq 0$, otherwise 
$f_1(z) = -f_1(z') = f_1(-z')$; so it suffices to consider instead a
nonzero $-z' \in q$ 
with $f(-z') \notin p_1$. In that case, $f_1(z) + f_1(z') 
\neq 0$ and $f_2(z') \neq 0$.  Then 
$(1,1) \rad \bigl( f_1(z) + f_1(z'), f_2(z') \bigr)$, that is,
$$
f(1) \rad f(z + z'),
$$
and therefore $1 \rad z + z'$. Since $z + z' \in q$ and $q$ is
minimal, there is an $l \in D \setminus q$ such that $(z + z')l = 0$.
As $\Ann(z + z') \subseteq \Ann(1) = \{0\}$, again
by~\eqref{eq:radical-relation-pi-A}, it follows that $l = 0 \notin q$,
a contradiction. In fine, $f(z') \in p_1$, i.e., $z' \in f^{-1}(p_1)$,
for all $z' \in q$. Hence $q \subseteq f^{-1}(p_1)$. Indeed,
$q = f^{-1}(p_1)$ since $f^{-1}(p_1)$ is minimal.

The other possibility $f(z) \in p_2$ likewise carries us to
$q = f^{-1}(p_2)$. Therefore, the inclusion \eqref{eq:two-in-specmin}
is an equality; which proves Claim~\ref{cl:minimal-spectrum}.
\end{proof}

In this way, $f^{-1} \colon \specm(V_1 \x V_2) = \{p_1, p_2\} 
\to \bigl\{ f^{-1}(p_1), f^{-1}(p_2) \bigr\} = \specm(D)$ is
surjective.%
\footnote{The surjectivity of $f^{-1}$ may also be proved using
Propositions (a) and~(b) in \cite[p.~22]{Pre-Schw}.}

\medskip

Let us reconsider $f \colon D \to V_1 \x V_2
: x \mapsto \bigl( f_1(x), f_2(x) \bigr)$, where $f_1$ and $f_2$ are
homomorphisms of ordered rings. Since
$$
f^{-1}(p_1) = \set{d \in D : f(d) \in p_1}
= \set{d \in D : f_2(d) = 0} = \ker(f_2),
$$ 
it follows that, as rings:
\begin{equation}
D/f^{-1}(p_1) = D/\ker(f_2) \cong \im(f_2) \subseteq V_2.
\label{eq:quotient-of-D} 
\end{equation}

\begin{claim} 
\label{cl:substructure-in-Lor}
$D/f^{-1}(p_1)$ is (isomorphic to) a substructure of~$V_2$ for the
language~$\Lor$. 
\end{claim}

\begin{proof}[Proof of Claim~\ref{cl:substructure-in-Lor}]
We first show that the ideal $f^{-1}(p_1)$ is convex. 

Let $c,d \in D$ be such that $0 \leq c \leq d$ with
$d \in f^{-1}(p_1)$. Then $f(d) \in p_1$, i.e., $f_2(d) = 0$. Since
$f$ is order-preserving, $0 \leq f(c) \leq f(d)$, that is,
$(0,0) \leq \bigl( f_1(c), f_2(c) \bigr) 
\leq \bigl( f_1(d), f_2(d) \bigr)$. Thus,
$$
0 \leq f_1(c) \leq f_1(d) \word{and} 0 \leq f_2(c) \leq f_2(d) = 0.
$$
So $f_2(c) = 0$, and thus $c \in f^{-1}(p_1)$. Remark that
\eqref{eq:quotient-of-D} tells us that $D/f^{-1}(p_1)$ may be viewed
as a subring of $V_2$ by 
$d/f^{-1}(p_1) = d/\ker(f_2) \mapsto f_2(d) \in V_2$. Since
$f^{-1}(p_1)$ is a convex (minimal prime) ideal, $D/f^{-1}(p_1)$ is a
totally ordered domain where the order is given by:
$d_1/f^{-1}(p_1) \leq d_2/f^{-1}(p_1)$ if and only if there is some
$c \in f^{-1}(p_1)$ such that $d_1 \leq c + d_2$, see \S\,2.3 
and~\S\,8.3 of~\cite{BKW} for details. The injection
$D/f^{-1}(p_1) \hookto V_2$ preserves the order, for if
$d_1,d_2 \in D$ make $d_1/f^{-1}(p_1) \leq d_2/f^{-1}(p_1)$, then
there is $c \in f^{-1}(p_1)$ such that $d_1 \leq c + d_2$. Then
$f(d_1) \leq f(c + d_2) = f(c) + f(d_2)$. In short,
$$
\bigl( f_1(d_1), f_2(d_1) \bigr) 
\leq \bigl( f_1(c), f_2(c) \bigr) + \bigl( f_1(d_2), f_2(d_2) \bigr).
$$
Now $c \in f^{-1}(p_1)$ yields $f(c) \in p_1$ and so $f_2(c) = 0$.
Then
$$
\bigl( f_1(d_1), f_2(d_1) \bigr) 
\leq \bigl( f_1(c), 0 \bigr) + \bigl( f_1(d_2), f_2(d_2) \bigr).
$$
The second coordinate reads $f_2(d_1) \leq f_2(d_2)$. Since the orders
in $D/f^{-1}(p_1)$ and $V_2$ are total, the reverse implication holds.
Thus,
$$
d_1/f^{-1}(p_1) \leq d_2/f^{-1}(p_1) \iff f_2(d_1) \leq f_2(d_2).
$$
Then $D/f^{-1}(p_1)$ is an order subring of~$V_2$, hence a
substructure in the language~$\Lor$.
\end{proof}

\begin{claim} 
\label{cl:substructure-in-Lor-plus}
$D/f^{-1}(p_1)$ is (isomorphic to) a substructure of~$V_2$ for the
language $\Lor \cup \{\mid\}$.
\end{claim}

\begin{proof}[Proof of Claim~\ref{cl:substructure-in-Lor-plus}]
This inclusion preserves the divisibility relation because $f$
preserves the local divisibility. To see that, we must prove:
\begin{equation}
d_1 + f^{-1}(p_1) \bigm| d_2 + f^{-1}(p_1)
\iff f_2(d_1) \mid f_2(d_2).
\label{eq:divisibility} 
\end{equation}
Firstly, take $d_1,d_2 \in D$ such that
$d_1 + f^{-1}(p_1) \bigm| d_2 + f^{-1}(p_1)$ in $D/f^{-1}(p_1)$.
Then there is some $d \in D$ such that 
$d_1 d = d_2 \bmod f^{-1}(p_1)$. Therefore 
$d_1 d - d_2 \in f^{-1}(p_1)$, and so $f_2(d_1 d - d_2) = 0$; thus
$f_2(d_1) f_2(d) - f_2(d_2) = 0$ so that $f_2(d_1) f_2(d) = f_2(d_2)$.
Therefore, $f_2(d_1) \mid f_2(d_2)$ in $\im(f_2) \subseteq V_2$.

Secondly, choose $d_1,d_2 \in D$ such that $f_2(d_1) \mid f_2(d_2)$
in~$V_2$. To reach the left-hand side of~\eqref{eq:divisibility} in
$D/ f^{-1}(p_1)$, note that it clearly holds if $f_2(d_2) = 0$, since
$d_2 \in f^{-1}(p_1)$ and then $d_2 + f^{-1}(p_1) = 0$.

If $f_2(d_2) \neq 0$, take $a \in V_2$ such that 
$f_2(d_1) \cdot a = f_2(d_2)$; necessarily, $a \neq 0$. Now consider
$w := (0, f_2(d_2)) \neq (0,0)$ and notice that:
\begin{align*}
w (w - f(d_2)) &= (0, f_2(d_2)) 
\cdot \bigl[ (0, f_2(d_2)) - (f_1(d_2), f_2(d_2)) \bigr]
\\
&= (0, f_2(d_2)) \cdot (-f_1(d_2), 0) = (0,0), 
\end{align*}
and
$$
f(d_1) \cdot (0,a) = (f_1(d_1), f_2(d_1)) \cdot (0,a)
= \bigl( f_1(d_1) \cdot 0, f_2(d_1) \cdot a \bigr) 
= (0, f_2(d_2)) = w,
$$
whereby $f(d_1) \mid w$ in $A$. We have shown that
$$
\exists w\, \bigl( w \neq 0 \land w \cdot (w - f(d_2)) = 0 
\land f(d_1) \mid w \bigr)
$$
is true in~$A$. Then $f(d_1) \divloc f(d_2)$ in $A$; 
see~\eqref{eq:local-divisibility} above. Since $f$ preserves the
local divisibility, we obtain $d_1 \divloc d_2$ in $D$. Under the
assumption $f_2(d_2) \neq 0$, $d_2 \neq 0$ holds since $f$ is
injective. There is $d'_2 \in D$ such that $d'_2 \neq 0$,
$d_2(d_2 - d'_2) = 0$ and $d_1 \mid d'_2$ in~$D$. Then there is
some $c \in D$ with $d_1 c = d'_2$, and thus
$f_2(d_1) f_2(c) = f_2(d'_2)$. Evaluating $f_2$ on 
$d_2(d_2 - d'_2) = 0$, one finds
$f_2(d_2) \bigl( f_2(d_2) - f_2(d'_2) \bigr) = 0$. Since
$f_2(d_2) \neq 0$, $f_2(d_2) = f_2(d'_2)$ follows. That implies
$f_2(d_1) f_2(c) = f_2(d_2)$, and so $f_2(d_1 c - d_2) = 0$ and
$d_1c - d_2 \in f^{-1}(p_1)$. In summary:
$$
\bigl( d_1 + f^{-1}(p_1) \bigr) \cdot \bigl( c + f^{-1}(p_1) \bigr)
= \bigl( d_2 + f^{-1}(p_1) \bigr).
$$
This establishes the equivalence \eqref{eq:divisibility}, finishing 
the proof of Claim~\ref{cl:substructure-in-Lor-plus}.
\end{proof}

Similarly, it can be proved $D/f^{-1}(p_2) \subseteq V_1$ in the
language $\Lor \cup \{\mid\}$.  

\medskip

We come back to the proof of the amalgamation property for
$(\rT_2)_\forall$ in Case~1. 

Return to $B = B_1 \x B_2$, where $B_1$ and $B_2$ are real closed
valuation rings. Considering now the monomorphism $g \colon D \to B$,
one can show, by the previous arguments for~$f$, that
$$
\specm(D) = \bigl\{ g^{-1}(q_1), g^{-1}(q_2) \bigr\},
$$
where $q_1$ and $q_2$ are the minimal prime ideals of~$B$, given by
$q_1 := \set{(a,0) : a \in B_1}$ and $q_2 := \set{(0,b) : b \in B_2}$.
So without loss of generality, it can be assumed that
$f^{-1}(p_1) = g^{-1}(q_1)$ and $f^{-1}(p_2) = g^{-1}(q_2)$.
It can be shown similarly that $D/g^{-1}(q_1) \hookto B_2$ and
$D/g^{-1}(q_2) \hookto B_1$ by monomorphisms in the language
$\Lor \cup \{\mid\}$. These monomorphisms are none other than
$g_2 := \pi_2 \circ g$ and $g_1 := \pi_1 \circ g$, respectively: 
$$
\begin{tikzcd}[column sep=small]
& V_1
\\
D_1 = D/f^{-1}(p_2) = D/g^{-1}(q_2) \qquad
\ar[dr, "g_1"'] \ar[ur, "f_1"]
\\
& B_1,
\end{tikzcd}
$$
and
$$
\begin{tikzcd}[column sep=small]
& V_2
\\
D_2 = D/f^{-1}(p_1) = D/g^{-1}(q_1) \qquad 
\ar[dr, "g_2"'] \ar[ur, "f_2"]
\\
& B_2.
\end{tikzcd}
$$
By the amalgamation property of the universal theory of RCVR, there
exist two real closed valuation rings $C_1$ and $C_2$, and
monomorphisms $h_i \colon V_i \to C_i$ and $k_i \colon B_i \to C_i$ in
the language $\Lor \cup \{\mid\}$, for $i = 1,2$; such that the
following diagrams are commutative:
\begin{equation}
\begin{tikzcd}
& V_1 \ar[dr, "h_1"]
\\
D_1 \ar[ur, "f_1"] \ar[dr, "g_1"'] & \conmutar & C_1
\\
& B_1 \ar[ur, "k_1"'] 
\end{tikzcd}
\word{and}
\begin{tikzcd}
& V_2 \ar[dr, "h_2"]
\\
D_2 \ar[ur, "f_2"] \ar[dr, "g_2"'] & \conmutar & C_2.
\\
& B_2 \ar[ur, "k_2"']  
\end{tikzcd}
\label{eq:two-diagrams} 
\end{equation}
These two diagrams can be glued together in the following one:
$$
\begin{tikzcd}[column sep=tiny]
\qquad \qquad
& D/f^{-1}(p_2) \x D/f^{-1}(p_1) \subseteq V_1 \x V_2  \ar[dr, "h"]
\\
D \ar[ur, "f"] \ar[dr, "g"'] & \conmutar & C_1 \x C_2,
\\
\qquad \qquad
& D/g^{-1}(q_2) \x D/g^{-1}(q_1) \subseteq B_1 \x B_2  \ar[ur, "k"']
\end{tikzcd}
$$
where $h = (h_1,h_2)$ and $k = (k_1,k_2)$. The last diagram does
commute, since for each $d \in D$,
\begin{align*}
(h \circ f)(d) = h(f(d))
&= (h_1, h_2) \circ \bigl( f_1(d), f_2(d) \bigr) 
\\
&= \bigl( (h_1 \circ f_1)(d), (h_2 \circ f_2)(d) \bigr)
\\
&= \bigl( (k_1 \circ g_1)(d), (k_2 \circ g_2)(d) \bigr)
\\
&= (k_1, k_2) \circ \bigl( g_1(d), g_2(d) \bigr) = (k \circ g)(d).  
\end{align*}
It is obvious that $C = C_1 \x C_2 \models \rT_2$.
Using the following equivalences:
\begin{align*}
(b_1,b_2) \rad (a_1,a_2)
&\iff b_1 \rad a_1 \mot{and} b_2 \rad a_2,
\\
(b_1,b_2) \mid (a_1,a_2)
&\iff b_1 \mid a_1 \mot{and} b_2 \mid a_2,
\\
(b_1,b_2) \divloc (a_1,a_2)
&\iff (a_1 = a_2 = 0) \mot{or} b_1 \mid a_1 \mot{or} b_2 \mid a_2,
\end{align*}
it follows easily that $h \colon V_1 \x V_2 \to C_1 \x C_2$ and 
$k \colon B_1 \x B_2 \to C_1 \x C_2$ are monomorphisms in the language
$\Lanre \cup \{\mid,\rad,\divloc\}$.
End of Case 1.

\paragraph{Case 2}
\textit{$D$ is a domain}.

The Boolean algebra of idempotents is now $\id D = \{0,1\}$. As in the
previous case, we denote $f \colon D \to A = V_1 \x V_2$ by
$f(d) = \bigl(f_1(d), f_2(d) \bigr)$, with $f_1$ and $f_2$ being ring
homomorphisms. Here, it is clear that $f_1$ and $f_2$ are actually
monomorphisms: for nonzero $x \in D$, $1 \rad x$ and therefore
$f(1) \rad f(x)$, i.e., $(1,1) \rad \bigl( f_1(x), f_2(x) \bigr)$, so
$f_1(x) \neq 0$ and $f_2(x) \neq 0$. It is easily verified that
$f_1$ and $f_2$ are also monomorphism of ordered rings, that is, in
the language~$\Lor$. 

To see that $f_1$ and $f_2$ preserve divisibility, one must use that
$f$ preserves the local divisibility. If $d_1 \mid d_2$ in~$D$, then
$f(d_1) \mid f(d_2)$ in~$A$. Therefore $f_1(d_1) \mid f_1(d_2)$
in~$V_1$ and also $f_2(d_1) \mid f_2(d_2)$ in~$V_2$.

For the reverse implication, suppose that $d_1, d_2 \in D$ entail
$f_1(d_1) \mid f_1(d_2)$ in~$V_1$. Then $f(d_1) \divloc f(d_2)$ and
therefore $d_1 \divloc d_2$ in~$D$. Since $D$ is a domain, this
entails $d_1 \mid d_2$ in~$D$. Similarly, $f_2$ preserves
divisibility.
 
\medskip
 
Therefore $f_1 \colon D \to V_1$ and $f_2 \colon D \to V_2$ are
monomorphisms in the language $\Lor \cup \{\mid\}$. And as before, 
there are monomorphisms $g_1 \colon D \to B_1$ and 
$g_2 \colon D \to B_2$ in the language $\Lor \cup \{\mid\}$, where
$g(d) = \bigl( g_1(d), g_2(d) \bigr)$ for all $d \in D$.

By the elimination of quantifiers of the theory RCVR and the
amalgamation property of its universal theory, one has the following
commutative diagrams, not unlike those of~\eqref{eq:two-diagrams}:
$$
\begin{tikzcd}
& V_1 \ar[dr, "h_1"]
\\
D \ar[ur, "f_1"] \ar[dr, "g_1"'] & \conmutar & C_1
\\
& B_1 \ar[ur, "k_1"'] 
\end{tikzcd}
\word{and}
\begin{tikzcd}
& V_2 \ar[dr, "h_2"]
\\
D \ar[ur, "f_2"] \ar[dr, "g_2"'] & \conmutar & C_2.
\\
& B_2 \ar[ur, "k_2"']  
\end{tikzcd}
$$
where again $C_1$ and $C_2$ are real closed valuation rings and
$h_1$, $h_2$, $k_1$, $k_2$ are monomorphisms of ordered rings
preserving divisibility. Considering $C = C_1 \x C_2 \models \rT_2$
and $h = (h_1,h_2)$, $k = (k_1,k_2)$, the following diagram is
completed:
$$
\begin{tikzcd}
& A \ar[dr, "h"]
\\
D \ar[ur, "f"] \ar[dr, "g"'] & \conmutar & C.
\\
& B \ar[ur, "k"'] 
\end{tikzcd}
$$
As before, it can be proved that $h$ and $k$ are monomorphisms in
$\Lanre \cup \{\mid,\rad,\divloc\}$, and 
the commutativity of this diagram can likewise be checked.
End of Case~2.

\medskip

The amalgamation property for the universal theory of $\rT_2$ has now
been established in both cases, thereby completing the proof of
Lemma~\ref{lm:T2-amalgamates}.
\end{proof}

\begin{proof}[Proof of Theorem~\ref{th:T2-is-qe}]
The elimination of quantifiers for $\rT_2$ now follows directly from
Lemmas \ref{lm:T2-is-model-complete} and~\ref{lm:T2-amalgamates}.
\end{proof}

Now consider $\rF_2 = \Th(F)$ where $F = F_1 \x F_2$ with $F_1$
and~$F_2$ being two real closed fields. In fact, $\rF_2$ is the theory
of von~Neumann regular real closed rings where the are only four
idempotents. Proceeding in the same way as the previous
theory~$\rT_2$, one can prove a similar result.

\begin{prop} 
\label{pr:F2-is-qe}
$\rF_2$ has elimination of quantifiers in the language
$\Lanre \cup \{\rad\}$.
\qed
\end{prop}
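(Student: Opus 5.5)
We would mirror the proof of Theorem~\ref{th:T2-is-qe}: establish that $\rF_2$ is model complete and that $(\rF_2)_\forall$ has the amalgamation property in $\Lanre \cup \{\rad\}$. Together these give elimination of quantifiers by the standard criterion. Throughout, the real closed fields $F_i$ play the role of the valuation rings $V_i$. The input from the one-sorted case is Tarski's theorem: RCF admits q.e.\ in $\Lor$, so it is model complete and its universal theory (ordered domains) amalgamates. Divisibility no longer appears, since in a field $x \mid y$ is trivial unless $x=0$.

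\emph{Model completeness.} Let $F = F_1 \x F_2 \subseteq G = G_1 \x G_2$ be models of $\rF_2$. The idempotents of $F$ are four elements, which must be the four idempotents of $G$, possibly with $(1,0)$ and $(0,1)$ swapped. After relabelling we get $F_i \subseteq G_i$ as ordered fields. Model completeness of RCF gives $F_i \selem G_i$, and Feferman--Vaught then gives $F \selem G$, exactly as in Lemma~\ref{lm:T2-is-model-complete}.

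\emph{Amalgamation.} Let $f\colon D \to A = F_1 \x F_2$ and $g\colon D \to B = B_1 \x B_2$ be monomorphisms in $\Lanre \cup \{\rad\}$. We split into the same two cases.

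Case 1: $D$ is not a domain. The proof of Claim~\ref{cl:minimal-spectrum} uses only ring structure, injectivity and preservation of~$\rad$, so it carries over verbatim and gives $\specm(D) = \{f^{-1}(p_1), f^{-1}(p_2)\} = \{g^{-1}(q_1), g^{-1}(q_2)\}$. After matching indices, the proof of Claim~\ref{cl:substructure-in-Lor} shows that $D/f^{-1}(p_1) \cong \im(f_2)$ and $D/g^{-1}(q_1)\cong \im(g_2)$ as ordered domains (convexity of $\ker f_2$ comes from $f$ preserving the lattice order), and likewise for the other index. These quotients are ordered subrings of real closed fields. Amalgamating them over RCF yields $C_i$ with embeddings $h_i, k_i$, and we set $C = C_1 \x C_2$, $h = (h_1,h_2)$, $k=(k_1,k_2)$.

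Case 2: $D$ is a domain. Each nonzero $x$ satisfies $1 \rad x$, so $f_i$ and $g_i$ are injective. We amalgamate $f_i,g_i$ componentwise.

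In both cases we then check that $h$ and $k$ preserve the lattice operations (computed coordinatewise) and $\rad$, using that $(b_1,b_2)\rad(a_1,a_2)$ holds iff it holds in each coordinate. Since $C_i$ are fields, $\rad$ there is just $b_i \ne 0 \Rightarrow a_i\ne 0$, and injective field maps preserve and reflect it.

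The main obstacle is the amalgamation of the ordered domains $D_i = D/f^{-1}(p_j)$ over real closed fields. The images $f_2(D)$ and $g_2(D)$ must induce the \emph{same} order on $D_2$, not merely be isomorphic as rings. This is where we need convexity of the minimal primes and Claim~\ref{cl:substructure-in-Lor}: the order on $D_2$ is intrinsic (the quotient order), and both $f_2$ and $g_2$ are order-embeddings of it. Amalgamation for ordered domains then follows by passing to real closures of fraction fields. The real closure of an ordered field is unique up to unique isomorphism over the field, and we embed it into both $F_2$ and $B_2$ and amalgamate via q.e.\ of RCF.
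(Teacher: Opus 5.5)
Your strategy is the one the paper intends; its proof of this proposition is just ``proceed as for $\rT_2$''. Your model-completeness argument and your Case~1 go through. The gap is the first sentence of your Case~2: being a domain does \emph{not} give $1\rad x$ for all nonzero $x\in D$.

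The relation $\rad$ on $D$ is not the radical relation of $\specm(D)$. It is the one pulled back along $f$: $a\rad b$ holds in $D$ iff $b\in\ker f_i\Rightarrow a\in\ker f_i$ for $i=1,2$. When $D$ is a domain, the inclusion $\ker f_1\cdot\ker f_2\subseteq\ker f_1\cap\ker f_2=0$ only forces \emph{one} kernel to vanish. Here is a concrete example.
\begin{itemize}
\item Let $R$ be a real closed field containing the real numbers and a positive infinitesimal $t$.
\item Let $\mathcal{O}\subseteq R$ be the ring of elements bounded in absolute value by some real number, and let $\operatorname{st}\colon\mathcal{O}\to R$ be the standard part map.
\item Since $\operatorname{st}$ is a monotone ring homomorphism, it preserves $\w$. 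Hence $D=\set{(a,\operatorname{st}(a)) : a\in\mathcal{O}}$ is an $\Lanre$-substructure of $R\x R$.
\item The first projection gives $D\cong\mathcal{O}$, so $D$ is a domain.
\item Yet $x=(t,0)\in D$ is nonzero, and $(0,1)\,x=0$ gives $1\notrad x$ in $R\x R$, hence in $D$. Moreover $f_2=\pi_2$ kills $x$.
\end{itemize}
So ``amalgamate $f_i,g_i$ componentwise over $D$'' is not available, because $f_2$ is not an embedding of $D$ into $F_2$. The paper's Case~2 of Lemma~\ref{lm:T2-amalgamates} makes the same inference, so mirroring it does not cover this situation.

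The repair is short, and it makes the domain/non-domain split and Claim~\ref{cl:minimal-spectrum} unnecessary.
\begin{enumerate}
\item Put $P_i=\ker f_i$ and $Q_i=\ker g_i$. These are prime $\ell$-ideals with $P_1\cap P_2=0=Q_1\cap Q_2$.
\item For every $b\in D$ one has $\set{a : a\rad b}=\bigcap\set{P_i : b\in P_i}$, where an empty intersection means $D$. Note that $P_1\setminus P_2\neq\emptyset$ whenever $P_1\neq 0$. Hence the family of these sets, as $b$ ranges over $D$, is exactly $\{0,P_1,P_2,D\}$.
\item Computing the same family through $g$ gives $\{0,Q_1,Q_2,D\}$. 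Together with $P_1\cap P_2=0=Q_1\cap Q_2$, this forces $\{P_1,P_2\}=\{Q_1,Q_2\}$. After swapping the factors of $B$ you may assume $P_i=Q_i$.
\item The order on the domain $D/P_i$, given by $d+P_i\geq 0$ iff $d\w 0\in P_i$, depends only on $D$. Both $f_i$ and $g_i$ induce order embeddings of $D/P_i$, into $F_i$ and $B_i$ respectively.
\item Amalgamate over RCF for $i=1,2$, and take $C=C_1\x C_2$, $h=(h_1,h_2)$, $k=(k_1,k_2)$.
\end{enumerate}
This handles three cases uniformly: both kernels zero, exactly one zero (the case you missed, where $D$ is still a domain), and neither zero.
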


It is of interest to see when the divisibility relation can be viewed
as a radical relation.

\begin{lem} 
\label{lm:when-divisibility-is-radical}
Let $A$ be a ring. The divisibility defines a radical relation
$a \rad b$ on~$A$ by $b \mid a$, if and only if $A$ is a von~Neumann
regular ring. In that case, the divisibility is given by the radical
relation associated to the minimal prime spectrum.
\end{lem}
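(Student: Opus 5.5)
We define $a \rad b$ to mean $b \mid a$ and check the six axioms of \eqref{eq:radical-relations} in turn. Axioms (1)–(4) are automatic for divisibility in any commutative ring. Reflexivity holds since $a \mid a$. Transitivity holds since $c \mid b$ and $b \mid a$ give $c \mid a$. Axiom (3) holds since $c \mid a$ and $c \mid b$ give $c \mid a+b$. Axiom (4) holds since $b \mid a$ gives $bc \mid ac$. In (5), the part $a \rad 1$, i.e.\ $1 \mid a$, is trivial, and $1 \notrad 0$ means $0 \nmid 1$, which holds because $1 \neq 0$. So the only axiom with content is (6), $b \rad b^2$, which reads $b^2 \mid b$: for every $b$ there is $x$ with $b = b^2 x$. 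This is exactly von~Neumann regularity, so the divisibility relation is a radical relation if and only if $A$ is von~Neumann regular. The first claim needs nothing more.

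For the second claim, assume $A$ is von~Neumann regular. We must show that $b \mid a$ holds if and only if $a \rad_{\pi A} b$, which by \eqref{eq:radical-relation-pi-A} means $\Ann(b) \subseteq \Ann(a)$.

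The direction from divisibility to the annihilator inclusion holds in any ring. If $a = bc$ and $bx = 0$, then $ax = bcx = 0$.

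For the converse we use idempotents. Regularity gives $b = b^2 x$, so $e := bx$ is idempotent, and $b = eb$. Moreover $be = b$, so $b(1-e) = 0$. Hence $1-e \in \Ann(b) \subseteq \Ann(a)$, which gives $a = ae = a\,bx = b(ax)$, so $b \mid a$.

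The main obstacle is making sure the conventions line up. The relation in \cite{Pre-Schw} runs in a particular direction, and "$a \rad b$ by $b \mid a$" must match the minimal-spectrum relation in that same direction. I will double-check this against \eqref{eq:radical-relation-X}. Von Neumann regular rings are reduced, and every prime ideal is minimal, so $\specm(A) = \spec(A)$. In this setting, $b \mid a$ says that $a \in p$ whenever $b \in p$, i.e.\ $a \notin p$ implies $b \notin p$. This is precisely the right-hand side of \eqref{eq:radical-relation-X} with $X = \pi A$. The argument with the idempotent $e$ is the concrete verification of this.
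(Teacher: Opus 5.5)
Your proposal is correct and matches the paper's proof step for step. Axioms (1)--(5) are automatic for divisibility, axiom (6) is exactly $b^2 \mid b$ (von~Neumann regularity), and the converse $\Ann(b) \subseteq \Ann(a) \Rightarrow b \mid a$ uses the same trick $b(1-bx)=0$ with $b=b^2x$ (your $e=bx$ is the paper's $bb^*$). Your closing cross-check via \eqref{eq:radical-relation-X} is unnecessary, since \eqref{eq:radical-relation-pi-A} already fixes the direction of the relation; the prime-ideal equivalence you state there is supplied by that cited characterization, not verified by your idempotent argument.
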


\begin{proof}
Set $a \rad b \iff b \mid a$, for all $a,b \in A$. This binary
relation clearly satisfies the conditions (1)~to~(5) defining a
radical relation, see Eq.~\eqref{eq:radical-relations} above. This
relation $\rad$ satisfies condition~(6) if and only if $b^2 \mid b$
for all $b \in A$. Hence $\rad$ satisfies condition~(6) if and only
if $A$~is a von~Neumann regular ring. In short, $a \rad b$ defined by
$b \mid a$ is a radical relation on~$A$ if and only if $A$ is a
von~Neumann regular ring.

\medskip

Suppose that $A$ is a von Neumann regular ring. We claim that
$b \mid a$ if and only if $\forall x\,(bx = 0 \implies ax = 0)$.  
First assume there is some $c \in A$ with $bc = a$; if $x \in A$
satisfies $bx = 0$, then clearly $(bx)c = 0$, and then 
$ax = (bc)x = 0$. On the other hand, suppose that 
$\forall x\,(bx = 0 \implies ax = 0)$ with $a,b \in A$. By the
von~Neumann regularity of~$A$, we can find $b^* \in A$ such that
$b^2 b^* = b$. Clearly $b(1 - bb^*) = b - b^2 b^* = 0$, and by our
hypothesis we get $a(1 - bb^*) = 0$, that is $a = a(bb^*)$, and then
$b(ab^*) = a$; so $b \mid a$ in $A$. We have shown that the
divisibility is the radical relation associated to the minimal prime
spectrum, see Eq.~\eqref{eq:radical-relation-pi-A}.
\end{proof}

\begin{prop} 
\label{pr:F2-is-still-qe}
$\rF_2$ has elimination of quantifiers in the language
$\Lanre \cup \{\mid,\rad,\divloc\}$. 
\end{prop}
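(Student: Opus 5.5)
The plan is to reduce this statement to Proposition~\ref{pr:F2-is-qe} by showing that the two extra relations $\mid$ and $\divloc$ are quantifier-free definable in $\Lanre \cup \{\rad\}$ modulo $\rF_2$. Once every atomic formula of the enlarged language is $\rF_2$-equivalent to a quantifier-free formula of $\Lanre \cup \{\rad\}$, any formula of $\Lanre \cup \{\mid,\rad,\divloc\}$ can be rewritten in $\Lanre \cup \{\rad\}$. There it is equivalent, by Proposition~\ref{pr:F2-is-qe}, to a quantifier-free formula of $\Lanre \cup \{\rad\}$, which is in particular quantifier-free in the larger language.

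\emph{Divisibility.} Every model of $\rF_2$ is von~Neumann regular. Lemma~\ref{lm:when-divisibility-is-radical} then gives
\[
\rF_2 \models \forall a\,\forall b\,\bigl( b \mid a \leftrightarrow a \rad b \bigr),
\]
since $\rad$ denotes the radical relation associated to the minimal prime spectrum, characterised in \eqref{eq:radical-relation-pi-A}. So $\mid$ is eliminated uniformly.

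\emph{Local divisibility.} Here I would work in the model $F = F_1 \times F_2$. Since $\rF_2$ is complete, it suffices to find a quantifier-free formula $\theta(y,w)$ of $\Lanre \cup \{\rad\}$ such that $F \models \forall y\,\forall w\,(y \divloc w \leftrightarrow \theta)$. By the componentwise description of $\divloc$ used at the end of Case~1, with divisibility in a field being $y_i \mid w_i \iff (y_i \neq 0 \lor w_i = 0)$, we get
\[
y \divloc w \iff w = 0 \lor (y_1 \neq 0 \land w_1 \neq 0) \lor (y_2 \neq 0 \land w_2 \neq 0).
\]
The right-hand side says exactly that $w = 0$ or $yw \neq 0$. (The case $w_i = 0$ of $y_i \mid w_i$ contributes nothing, since a $w' \neq 0$ is required.) So I would take $\theta(y,w) :\equiv w = 0 \lor yw \neq 0$, an $\Lor$-quantifier-free formula.

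To confirm the equivalence directly from \eqref{eq:local-divisibility}: $w' \neq 0$ with $w'(w-w') = 0$ means $w'$ agrees with $w$ on a nonempty set of coordinates where $w$ is nonzero and vanishes elsewhere. Then $y \mid w'$ in the regular ring $F$ amounts to $\Ann(y) \subseteq \Ann(w')$, i.e.\ $y$ is nonzero wherever $w'$ is.

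The only step needing real care is this computation of $\divloc$. It must be done correctly for the (possibly nonstandard) fields; because $\rF_2$ is complete and the defining formula is first order, verifying it in one model suffices. Everything else is a formal substitution argument.
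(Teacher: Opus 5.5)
Your proposal is correct and follows the same route as the paper. Both eliminate $\mid$ via Lemma~\ref{lm:when-divisibility-is-radical}, show that $\divloc$ is quantifier-free definable in $\Lanre \cup \{\rad\}$, and then invoke Proposition~\ref{pr:F2-is-qe}. Your formula $w = 0 \lor yw \neq 0$ is actually tidier than the paper's coordinatewise expression: that expression uses the unnamed projections $(a_1,0)$, $(0,a_2)$, and, read literally, it drops the condition $a_i \neq 0$, so it would wrongly give $(0,0) \divloc (0,1)$.
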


\begin{proof}
For $\rF_2$, Lemma~\ref{lm:when-divisibility-is-radical} shows that
divisibility is equivalent to the radical relation. Moreover, from the
end of the proof of Theorem~\ref{th:T2-is-qe}, the local divisibility
is given by
$$
(b_1,b_2) \divloc (a_1,a_2) 
\iff (a_1 = a_2 = 0) \mot{or} (b_1 \mid a_1) \mot{or} (b_2 \mid a_2),
$$
or equivalently:
\begin{align*}
(b_1,b_2) \divloc (a_1,a_2) 
& \iff (a_1,a_2) = (0,0) \mot{or} (a_1 \rad b_1) 
\mot{or} (a_2 \rad b_2)
\\
& \iff (a_1,a_2) = (0,0) \mot{or} (a_1,0) \rad (b_1,0)
\mot{or} (0,a_2) \rad (0,b_2).
\end{align*}
Hence, the local divisibility is defined (without quantifiers) by the
radical relation.
\end{proof}

\section{The main theorem} 
\label{sec:punchline}

In this section, we shall interpret one language into another and 
vice~versa. It helps to adopt a simpler notation:
\begin{equation}
\Le_1 = \Lanre \cup \{\mid,\rad,\divloc\} \word{and}
\Le_2 = \Lanre \cup \{\div(\cdot,\cdot)\}.
\label{eq:less-jargon} 
\end{equation}

\begin{rem} 
\label{rk:div-is-L1-definable}
Let $A$ be a divisible-projectable $f$-ring. The symbol
$\div(\cdot,\cdot)$ may be defined (with quantifiers) modulo
$\Th(A)$, in the language $\Le_1$. Note that the negation of local
divisibility is:
$$
y \notdivloc w \iff w \neq 0 \land \forall w' \bigl( 
w' \neq 0 \land w'(w - w') = 0 \to y \nmid w' \bigr).
$$
Therefore,
$$
(y \notdivloc w) \lor w = 0 \iff \forall w' \bigl(
w' \neq 0 \land w'(w - w') = 0 \to y \nmid w' \bigr).
$$
Then, recalling~\eqref{eq:div-rigmarole}:
\begin{align*}
\div(x,y) = c 
& \longequi c \rad y \land \exists z \exists w \bigl(
x = z + w \land z \cdot w = 0 \land cy = z
\land (y \notdivloc w \lor w = 0 ) \bigr)
\\
& \longequi c \rad y \land \exists z \exists w \bigl(
(x = z + w \land z \cdot w = 0 \land cy = z 
\land y \notdivloc w ) \lor cy = x \bigr).
\end{align*}
Remark that when $w = 0$ then $z = x$ and $cy = x$.
\end{rem}

\begin{rem} 
\label{rk:rad-is-L2-definable}
Let $A$ be a reduced projectable and divisible-projectable $f$-ring.
The radical relation is defined (without quantifiers) in the language
$\Le_2$. Then, by Eq.~\eqref{eq:three-faces-of-A-two}:
$$
A \in \Ga^a_{\Lor\cup\{\mid\}} \bigl( X, (A_x)_{x\in X} \bigl),
$$
where $X$ is a Boolean space and $(A_x)_{x \in X}$ is a family of
totally ordered domains. Clearly,
\begin{align*} 
a \rad b &\iff \forall x \in X\,
\bigl( b(x) = 0 \implies a(x) = 0 \bigr)
\\
&\iff \booleana{b = 0} \subseteq \booleana{a = 0}
\\
&\iff \booleana{a \neq 0} \subseteq \booleana{b \neq 0}
\\
&\iff \supp(a) \subseteq \supp(b). 
\end{align*}
According to \eqref{eq:div-values}
and~\eqref{eq:three-faces-of-A-three} above, it follows that
$$
\div(a,b) = \begin{cases}
c & \text{if } b \neq 0 \land bc = a, \\
0 & \text{if } b = 0 \lor (b \neq 0 \land b \nmid a), \end{cases}
$$
in each $A_i$, for $i \in X$. Therefore,
$$
\div(a,a) = \begin{cases}
1 & \text{if } a \neq 0 \land a \cdot 1 = a, \\
0 & \text{if } a = 0. \end{cases}
$$
Consequently, $\div(a,a)$ is the idempotent of~$A$ that represents
$\supp(a)$. Similarly for $b \in A$; and therefore:
$$
a \rad b \iff \div(a,a) \leq \div(b,b).
$$
\end{rem}

\begin{rem} 
\label{rk:div-is-L2-definable}
The divisibility relation is defined (without quantifiers) in the
language $\Le_2$. As in Remark~\ref{rk:rad-is-L2-definable}, let
$A$ be a reduced projectable and divisible-projectable $f$-ring
and let, by~\eqref{eq:three-faces-of-A-two}:
$$
A \in \Ga^a_{\Lor\cup\{\mid\}} \bigl( X, (A_x)_{x\in X} \bigl),
$$
where $X$ is a Boolean space and $\set{A_x : x \in X}$ is a family of
totally ordered domains. We assert:
$$
b \mid a \iff \forall x \in X \bigl(
a(x) \neq 0 \implies \div (a,b) (x) \neq 0 \bigr).
$$ 
($\Rightarrow$): Assume $b \mid a$ and $a(x) \neq 0$. Then there is
$c \in A$ with $bc = a$; therefore $b(x)c(x) = a(x) \neq 0$. Thus,
$c(x) \neq 0$ and $b(x) \neq 0$.  Clearly, by \eqref{eq:div-values}
and~\eqref{eq:three-faces-of-A-three}, 
$$
\div_A(a,b)(x) = \div_{A_x}(a(x), b(x)) = c(x) \neq 0.  
$$
($\Leftarrow$): Assume instead that $\forall x \in X\,
\bigl( a(x) \neq 0 \implies \div(a,b)(x) \neq 0 \bigr)$. If $a = 0$,
clearly $b \mid a$. If $a \neq 0$, let 
$N := \booleana{a \neq 0} \neq \emptyset$. Let $c = \div(a,b)$, so
$c \rad b$ and there are $z,w \in A$ such that $a = z + w$, $zw = 0$,
$bc = z$ and $\forall w'\,
\bigl( w' \neq 0 \land w'(w - w') = 0 \to b \nmid w' \bigr)$. For
$x \in N$, by hypothesis $\div(a,b)(x) = c(x) \neq 0$ holds. Now
$c \rad b$ entails $b(x) \neq 0$, and so $b(x) c(x) = z(x) \neq 0$.
Thus $w(x) = 0$ and $a(x) = z(x)$, or equivalently
$b(x) c(x) = a(x)$. If $x \notin N$, then $a(x) = 0$ and
$b(x) \cdot 0 = a(x)$. Redefining 
$d = c_{\rest_N} \cup 0_{\rest_{X\setminus N}} \in A$, one obtains
$bd = a$; and thus $b \mid a$.

\medskip  

We have now shown that
$$
b \mid a \iff \supp(a) \subseteq \supp(\div(a,b))
\iff a \rad \div (a,b).
$$
By Remark~\ref{rk:rad-is-L2-definable}, we deduce that
$$
b \mid a \iff \div(a,a) \leq \div\bigl( \div(a,b), \div(a,b) \bigr).
$$
\end{rem}

In, \cite[Prop.~4.14]{Guier4}, the following formula was proved for a
projectable reduced $f$-ring:
$$
\forall a \forall b \forall c\,
\bigl( a \notrad bc - a \longrightarrow b \divloc a \bigr).
$$
This formula is equivalent to
\begin{equation}
\forall a \forall b\, \bigl(
\exists c\, (a \notrad bc - a) \longrightarrow b \divloc a \bigr).
\label{eq:divloc-formula} 
\end{equation}
Note that if $a = 0$ in the hypothesis of the previous formula, then
$0 \notrad bc$; but this is always false since $0 \rad x$ for all
$x \in A$. So the hypothesis of~\eqref{eq:divloc-formula}, avoids
$a = 0$. This is the only restriction to deduce the reverse
implication, as the following result confirms.

\begin{prop} 
\label{pr:local-divisibility}
Let $A$ be a projectable reduced $f$-ring. Then the formula
\begin{equation}
\forall a \forall b\, \bigl( \exists c\,(a \notrad bc - a)
\longequi (b \divloc a \land a \neq 0) \bigr)
\label{eq:local-divisibility2} 
\end{equation}
is valid in~$A$.
\end{prop}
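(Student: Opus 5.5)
The forward implication of \eqref{eq:local-divisibility2} is essentially already available, so the plan is to assemble it from \cite[Prop.~4.14]{Guier4} and then prove the reverse implication directly from the definitions. For ($\Rightarrow$), suppose $a \notrad bc - a$ for some $c \in A$. By \cite[Prop.~4.14]{Guier4}, in the form \eqref{eq:divloc-formula}, we get $b \divloc a$. As observed just before the statement, $a = 0$ is impossible here: otherwise $0 \notrad bc$, which contradicts $0 \rad x$ for all $x$ (immediate from \eqref{eq:radical-relation-pi-A}, since $\Ann(x) \subseteq \Ann(0) = A$). Hence $a \neq 0$.

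For ($\Leftarrow$), assume $a \neq 0$ and $b \divloc a$. By \eqref{eq:local-divisibility}, the disjunct $a = 0$ is excluded. So there is $w' \neq 0$ with $w'(a - w') = 0$ and $b \mid w'$. Choose $c \in A$ with $bc = w'$; this $c$ is the witness I will use.

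By \eqref{eq:radical-relation-pi-A}, showing $a \notrad bc - a$ amounts to finding $x$ with $(bc - a)x = 0$ but $ax \neq 0$. I take $x = w'$. First, $(bc - a)w' = (w' - a)w' = 0$ by the choice of $w'$. Second, $aw' = w'^2$ because $w'a = w'^2$. Since $A$ is reduced and $w' \neq 0$, we have $w'^2 \neq 0$, so $aw' \neq 0$. Therefore $\Ann(bc - a) \not\subseteq \Ann(a)$, that is, $a \notrad bc - a$.

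The argument is short. The only point needing care is that the reverse direction uses only reducedness and the characterization \eqref{eq:radical-relation-pi-A} of $\rad$. The nontrivial content, and the only place where projectability and the $f$-ring structure enter, is the forward direction, which I take from \cite[Prop.~4.14]{Guier4} rather than reprove.
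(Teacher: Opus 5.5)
Your proof is correct. The forward direction matches the paper: both cite \cite[Prop.~4.14]{Guier4} and exclude $a=0$ using $0 \rad x$. For the converse you pick the same witness $c$ with $bc = w'$. You differ from the paper only in how you verify $a \notrad bc - a$.

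The paper passes to the Boolean product representation $A \in \Ga^a_{\Lor}\bigl(X,(A_x)_{x\in X}\bigr)$ of \eqref{eq:class-of-A}. It picks a point $x_0$ with $w(x_0)\neq 0$, deduces $a(x_0)=w(x_0)\neq 0$ and $(bc-a)(x_0)=0$, and then reads off $a \notrad bc-a$ stalkwise. That last step relies on identifying the points of $X$ with minimal primes, so that $\rad$ becomes the support inclusion. You instead use $w'$ itself as an element of $\Ann(bc-a)\setminus\Ann(a)$. This needs only reducedness, since $aw' = w'^2 \neq 0$, together with the annihilator form \eqref{eq:radical-relation-pi-A}.

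Your version is shorter. It also shows that the reverse implication holds in every reduced commutative ring: it uses neither projectability, nor the $f$-ring structure, nor the sheaf representation, which are needed only for the cited forward direction. The paper's stalkwise argument has the advantage of matching the Boolean-product style of the surrounding remarks, such as Remark~\ref{rk:divloc-is-L2-definable}. There the same pointwise reasoning is reused, and the converse of the proposition is then invoked.
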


\begin{proof}
The implication ($\Rightarrow$) was already discussed. For the
converse, invoking~\eqref{eq:class-of-A}, let
$$
A \in \Ga^a_{\Lor} \bigl( X, (A_x)_{x\in X} \bigl),
$$
where $X$ is a Boolean space and $\set{A_x : x \in X}$ is a family of
totally ordered domains. 

Suppose that $b \divloc a$ and $a \neq 0$. There is $w \in A$ such
that $w \neq 0$, $w(w - a) = 0$ and $bc = w$ for some $c \in A$. Since
$w \neq 0$, there exists $x_0 \in X$ such that $w(x_0) \neq 0$; and
then $w(x_0) = a(x_0) \neq 0$. Therefore
$b(x_0)c(x_0) = w(x_0) = a(x_0)$. In summary, $x_0 \in X$ satisfies
$a(x_0) \neq 0$ and $(bc - a)(x_0) = 0$. That entails
$a \notrad bc - a$, according to the definition 
\eqref{eq:radical-relation-pi-A} of $\rad$ for the minimal prime
spectrum. And we have shown that $\exists c\,(a \notrad bc - a)$.
\end{proof}

It is helpful to express the preceding formula
\eqref{eq:local-divisibility2} in the following contrapositive form:
$$
\forall a \forall b\, \bigl( \forall c\,(a \rad bc - a) 
\longequi (b \notdivloc a \lor a = 0) \bigr).
$$

\begin{rem} 
\label{rk:divloc-is-L2-definable}
The local divisibility relation is defined (without quantifiers) in
the language~$\Le_2$. Let $A$ be a reduced projectable and
divisible-projectable $f$-ring. We assert that for $a,b \in A$,
\begin{equation}
b \divloc a  \iff  a = 0 \lor \bigl( b \cdot \div(a,b) \neq 0 \bigr).
\label{eq:divloc-sans-quantifiers} 
\end{equation}

To show this, we reiterate \eqref{eq:three-faces-of-A-two}:
$$
A \in \Ga^a_{\Lor\cup\{\mid\}} \bigl( X, (A_x)_{x\in X} \bigl),
$$
where $X$ is a Boolean space and $\set{A_x : x \in X}$ is a family of
totally ordered domains.  

\medskip

($\Rightarrow$):
Suppose that $b \divloc a$ and $a \neq 0$. There is some $w \in A$
such that $w \neq 0$, $w(w - a) = 0$ and $b \mid w$. Since $w \neq 0$,
there exists $x \in X$ such that $w(x) \neq 0$. As usual,
$w(x) = a(x)$ and $b(x) \mid w(x) = a(x)$. There exists $c_x \in A_x$
such that $b(x) \cdot c_x = a(x)$, with $a(x) \neq 0$ and
$b(x) \neq 0$; set $\div (a,b) = c$. By the description of the
$\div$~symbol in the theory of totally ordered domains in
\cite[Lem.~4.4]{Guier1}, see~\eqref{eq:div-values}, 
$\div(a,b)(x) = c(x) = c_x \neq 0$. Then
$b(x) \cdot \div(a,b)(x) = b(x) \cdot c_x = a(x) \neq 0$. Hence
$b \cdot \div (a,b) \neq 0$.

\medskip

($\Leftarrow$):
If $a = 0$, then clearly $b \divloc a$ 
by~\eqref{eq:local-divisibility}. If $a \neq 0$, suppose
$b \cdot \div(a,b) \neq 0$. Set $c = \div(a,b)$. Then there is an
$x \in X$ such that $b(x) \neq 0$ and $c(x) \neq 0$. By the definition
\eqref{eq:div-rigmarole} of the $\div$ symbol, there are elements
$z,w \in A$ such that $a = z + w$, $z \cdot w = 0$, $bc = z$ and
$\forall w'\,(w' \neq 0 \land w'(w - w') = 0 \to b \nmid w')$. Then
$b(x)c(x) = z(x) \neq 0$ and therefore $w(x) = 0$ and
$a(x) = z(x) \neq 0$; we have shown the existence of $x \in X$ such
that $(bc - a)(x) = 0$ and $a(x) \neq 0$. In short,
$a \notrad bc - a$. Then the formula $\exists c\,(a \notrad bc - a)$
is valid in~$A$. By the previous
Proposition~\ref{pr:local-divisibility}, or \cite[Prop.~4.14]{Guier4},
we conclude that $b \divloc a$.
\end{rem}

We sum up the foregoing propositions in the following theorem.

\begin{them} 
\label{th:punchline}
Let $A$ be a reduced projectable and divisible-projectable $f$-ring
satisfying the first convexity property. Then $\Th(A)$ admits
elimination of quantifiers in $\Lanre \cup \{\mid,\rad,\divloc\}$ if
and only if one of the following conditions holds:
\begin{enumerate}[noitemsep, label=\textup{(\roman*)}]
\item 
$A$ is a real closed field,
\item 
$A$ is a product of two real closed fields,
\item 
$A$ is a von Neumann regular real closed ring without nonzero minimal
idempotents,
\item 
$A$ is a real closed valuation ring,
\item 
$A$ is a product of two real closed valuation rings,
\item 
$A$ is a sc-regular real closed ring without nonzero minimal
idempotents.
\end{enumerate}
\end{them}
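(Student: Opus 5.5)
The proof splits into the two implications. For the `if' direction I will go through the six cases, citing the relevant q.e.\ result and then converting between languages. Case (i) is Tarski's theorem in $\Lor$. In a field divisibility is $b\mid a \iff b\neq 0\lor a=0$, $\rad$ is $a\rad b\iff(a=0\lor b\neq 0)$, and $\divloc$ coincides with $\mid$; the lattice operations are quantifier-free definable from $<$, and conversely. So q.e.\ transfers to $\Lanre\cup\{\mid,\rad,\divloc\}$.

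Case (iv) is \cite[\S\,2]{Ch-Dick2} in $\Lor\cup\{\mid\}$. Here $\rad$ is again trivial in a domain, and $\divloc$ reduces to $\mid$ on domains, since $w'(w-w')=0$ with $w'\neq0$ forces $w'=w$.

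Cases (ii) and (v) are Proposition~\ref{pr:F2-is-still-qe} and Theorem~\ref{th:T2-is-qe}. Case (vi) is \cite[Thm.~6.13]{Guier4}; there the sc-regularity hypothesis is built into the statement, so nothing changes.

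Case (iii) needs more work. I start from \cite{Pre-Schw}, which gives q.e.\ in $\Lanre\cup\{\rad\}$. By Lemma~\ref{lm:when-divisibility-is-radical}, $\mid$ is quantifier-free definable from $\rad$. For $\divloc$ I plan to use Proposition~\ref{pr:local-divisibility}. In a von Neumann regular ring $b\mid w'$ just says $\supp(w')\subseteq\supp(b)$. Hence $b\divloc a$ iff $a=0$ or $\supp(a)\cap\supp(b)\ne\emptyset$, which is iff $a=0$ or $ab\neq0$. That is quantifier-free, so q.e.\ transfers to the enlarged language.

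For the `only if' direction I will reduce to \cite[Thm.~4.10]{Guier1}, which (via Point's work) classifies the reduced projectable, divisible-projectable $f$-rings with the first convexity property that admit q.e.\ in $\Le_2=\Lanre\cup\{\div(\cdot,\cdot)\}$. I expect that classification to give exactly the six listed classes. The translation from $\Le_1$ to $\Le_2$ runs as follows. Remarks~\ref{rk:rad-is-L2-definable}, \ref{rk:div-is-L2-definable} and~\ref{rk:divloc-is-L2-definable} show that $\rad$, $\mid$ and $\divloc$ are quantifier-free $\Le_2$-definable. Hence every quantifier-free $\Le_1$-formula is equivalent modulo $\Th(A)$ to a quantifier-free $\Le_2$-formula, with the terms of $\Le_1$ being $\Le_2$-terms. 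So if $\Th(A)$ eliminates quantifiers in $\Le_1$, every $\Le_1$-formula is equivalent to a quantifier-free $\Le_2$-formula.

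It remains to handle arbitrary $\Le_2$-formulas, which may contain nested $\div$-terms. By Remark~\ref{rk:div-is-L1-definable} the graph of $\div$ is $\Le_1$-definable. I will unnest: each atomic $\Le_2$-formula is equivalent to an existential formula over $\Le_1$-atomic pieces, obtained by introducing new variables for each occurrence of $\div$ and asserting the graph. This yields an $\Le_1$-formula, hence a quantifier-free $\Le_1$-formula, hence a quantifier-free $\Le_2$-formula. Therefore q.e.\ in $\Le_1$ implies q.e.\ in $\Le_2$, and \cite[Thm.~4.10]{Guier1} forces $A$ into one of (i)--(vi).

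I expect the main obstacle to be matching the hypotheses and the list in \cite[Thm.~4.10]{Guier1} exactly with (i)--(vi). In particular, the real closedness, the sc-regularity, and the `no nonzero minimal idempotents' conditions must appear in that theorem in the same form. The two-idempotent products (ii) and (v) are the likely exceptional cases there, arising from a Boolean algebra with exactly four elements.
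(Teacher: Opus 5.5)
Your proposal is correct and follows the paper's proof. For `only if', you combine Remark~\ref{rk:div-is-L1-definable} with Remarks~\ref{rk:rad-is-L2-definable}, \ref{rk:div-is-L2-definable} and~\ref{rk:divloc-is-L2-definable} to turn q.e.\ in $\Le_1$ into q.e.\ in $\Le_2$ and then invoke \cite[Thm.~4.10]{Guier1}; for `if', you cite the same six q.e.\ results. Your explicit language transfers for cases (i), (iii) and (iv) are also correct, notably $b\divloc a \iff a=0\lor ab\neq 0$ in the von Neumann regular case; the paper performs such a transfer only for case (ii) and leaves the others implicit.
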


\begin{proof}
Recall Eq.~\eqref{eq:less-jargon}:
$\Le_1 = \Lanre \cup \{\mid,\rad,\divloc\}$ and
$\Le_2 = \Lanre \cup \{\div(\cdot,\cdot)\}$.

\medskip

($\Rightarrow$):
By Remark~\ref{rk:div-is-L1-definable}, any formula in~$\Le_2$ is
equivalent to a formula in~$\Le_1$, possibly by adjoining more
quantifiers. The hypothesis says that this $\Le_1$-formula is
equivalent to a quantifier-free $\Le_1$-formula. By Remarks
\ref{rk:rad-is-L2-definable}, \ref{rk:div-is-L2-definable} and
\ref{rk:divloc-is-L2-definable}, the latter formula is equivalent to a
quantifier-free $\Le_2$-formula. Thus, $\Th(A)$ admits elimination of
quantifiers in~$\Le_2$. The result then follows by
\cite[Thm.~4.10]{Guier1}. This direction is essentially
\cite[Thm.~4.10]{Guier1} in the language~$\Le_2$. Indeed, what we have
done in this Section, cf.\ Remarks \ref{rk:rad-is-L2-definable},
\ref{rk:div-is-L2-definable} and~\ref{rk:divloc-is-L2-definable}, was
to prove that the symbols of~$\Le_1$ are definable without quantifiers
in the language~$\Le_2$ -- using the $\div(\cdot,\cdot)$ binary
symbol. The proof of the `only~if' part of \cite[Thm.~4.10]{Guier1}
rests on previous results on discriminator varieties by
F.~Point~\cite{Point-vd}.

\medskip

($\Leftarrow$): For the converse:
\begin{itemize}[nosep]
\item
Case~(i) is Tarski's theorem \cite{Tarski1, Tarski2}.
\item
Case~(ii) is given by Proposition~\ref{pr:F2-is-still-qe}. 
\item
Case~(iii) is the last Theorem on page~27 in~\cite{Pre-Schw}.
\item
Case~(iv) is the second section of~\cite{Ch-Dick2}.
\item
Case~(v) is given by Theorem~\ref{th:T2-is-qe}.
\item 
Case~(vi) is \cite[Thm.~6.13]{Guier4}.
\qed
\end{itemize}
\hideqed     
\end{proof}

This theorem has some advantages over \cite[Thm.~4.10]{Guier1}.
Firstly, the language $\Le_1$ 
has a better mathematical sense and it is more efficient than~$\Le_2$,
since the binary function symbol $\div(\cdot,\cdot)$ was replaced by
(binary) relations. Secondly, considering that the definition of the
$\div(\cdot,\cdot)$ is restricted to the divisible-projectability
condition on the ring, and that the local-divisibility relation is
not, we may ask whether there exists a larger class of real closed
rings admitting quantifier elimination in the language
$\Le_1 = \Lanre \cup \{\mid,\rad,\divloc\}$.

\subsection*{Acknowledgements}

I am grateful to the referee for corrections and suggestions that
improved the introduction and the presentation of the paper. Many
thanks to Michael Josephy and Joseph C. Várilly, who recommended
several improvements to the English (and Latex) style. I acknowledge
financial support from the Vicerrectoría de Investigación of the
University of Costa Rica via the research project 821--C3--192.


\end{document}

%% file: fonts-charlas7.tex
\DeclareMathOperator{\Ann}{Ann} 
\renewcommand{\div}{\operatorname{div}}

\DeclareMathOperator{\im}{im}

\DeclareMathOperator{\spec}{Spec}   
\DeclareMathOperator{\specm}{Specmin}   
\DeclareMathOperator{\supp}{supp} 
\DeclareMathOperator{\Th}{Th}   

\newcommand{\Ga}{\Gamma}          

\newcommand{\sI}{\mathcal{I}}

\newcommand{\sL}{\mathcal{L}}
\newcommand{\Le}{\mathcal{L}}

\newcommand{\loc}{{\mathrm{loc}}}

\newcommand{\rF}{\mathrm{F}}      
\newcommand{\rT}{\mathrm{T}}      

\newcommand{\conmutar}{\circlearrowright}

\newcommand{\ett}{\mathbin{\&}}     
         
\newcommand{\hookto}{\hookrightarrow}  

\renewcommand{\leq}{\leqslant}      
\newcommand{\longequi}{\longleftrightarrow}

\newcommand{\rad}{\preceq}
\newcommand{\notrad}{\not\preceq}
\newcommand{\rest}{\upharpoonright}
\newcommand{\selem}{\prec}

\newcommand{\w}{\wedge}             
\newcommand{\x}{\times}             

\newcommand{\lcro}{\llbracket}       
\newcommand{\rcro}{\rrbracket}       

\newcommand{\divloc}{\mid_{\loc}}

\newcommand{\Lanre}{\sL_{\mathrm{lor}}}
\newcommand{\locdiv}{\mid_{\loc}}
\newcommand{\Lor}{\sL_{\mathrm{or}}}
\newcommand{\notdivloc}{\nmid_{\loc}}

\newcommand{\bipol}[1]{{#1}^{\perp\perp}}
\newcommand{\booleana}[1]{\lcro#1\rcro}      
\newcommand{\id}[1]{\sI d(#1)}
\newcommand{\mot}[1]{\enspace\text{#1}\enspace} 

\newcommand{\set}[1]{\{\,#1\,\}}             
\newcommand{\Set}[1]{\bigl\{\,#1\,\bigr\}}   
\newcommand{\word}[1]{\quad\text{#1}\quad}   

